\documentclass{amsart}
\usepackage{color}
\usepackage{amssymb}
\usepackage{amsmath}
\usepackage{amscd}
\usepackage{amsbsy}
\usepackage{euscript}
\usepackage{verbatim}

\newtheorem{theorem}{Theorem}[section]
\newtheorem{lemma}[theorem]{Lemma}
\newtheorem{corollary}[theorem]{Corollary}
\newtheorem{proposition}[theorem]{Proposition}

\theoremstyle{definition}
\newtheorem{definition}[theorem]{Definition}
\newtheorem{remark}[theorem]{Remark}
\newtheorem{example}[theorem]{Example}

\newtheorem{def-not}[theorem]{Definition and Notation}

\renewcommand{\qed}{\hfill $\square$}

\definecolor{darkgreen}{rgb}{0.2, 0.7, 0.03}

\newcommand{\mx}{\rm{Max}}
\newcommand{\tmx}{\text{$t$-Max}}

\newcommand{\st}{\star}

\newcommand{\ra}{\Rightarrow}

\newcommand{\hgt}{{\rm ht}}
\newcommand{\tm}{\rm \text{$t$-} Max}
\newcommand{\bs}{\bigskip}

\newcommand{\mc}{\mathcal}

\begin{document}

\title{$\st$-super potent domains}

\author{Evan Houston}
\address{Department of Mathematics and Statistics, University of North Carolina at Charlotte,
Charlotte, NC 28223 U.S.A.}
\email{eghousto@uncc.edu}

\author{Muhammad Zafrullah}
\address{Department of Mathematic, Idaho State University, Pocatello, ID 83209 U.S.A.}
\email{mzafrullah@usa.net}

\thanks{MSC 2010: 13A05, 13A15, 13F05, 13G05}
\thanks{Key words: Star operation, (generalized) Krull domain, completely integrally closed domain, valuation domain}

\thanks{The first-named author was supported by a grant from the Simons Foundation (\#354565)}

\date{\today}

\begin{abstract} For a finite-type star operation $\st$ on a domain $R$, we say that $R$ is $\st$-super potent if each maximal $\st$-ideal of $R$ contains a finitely generated ideal $I$ such that (1) $I$ is contained in no other maximal $\st$-ideal of $R$ and (2) $J$ is $\st$-invertible for every finitely generated ideal $J \supseteq I$.  Examples of $t$-super potent domains include domains each of whose maximal $t$-ideals is $t$-invertible (e.g., Krull domains).  We show that if the domain $R$ is $\st$-super potent for some finite-type star operation $\st$, then $R$ is $t$-super potent, we study $t$-super potency in polynomial rings and pullbacks, and we prove that a domain $R$ is a generalized Krull domain if and only if it is $t$-super potent and has $t$-dimension one.
\end{abstract} 

\maketitle


\section*{Introduction}

Dedekind domains are characterized as those domains having all nonzero ideals invertible.  On the other hand, if $D$ is a Dedekind domain with quotient field $k$ and $x$ is an indeterminate, then the domain $R:=D+(x^2,x^3)k[[x^2,x^3]]$ has invertibility strictly above $M:= (x^2,x^3)R$, but $M$ itself is not invertible in $R$.  Similarly, it is well known that Krull domains are characterized as those domains having all nonzero ideals $t$-invertible (definitions reviewed below), while in the example above, one has $t$-invertibility only above a certain level.  The goal of this paper is to explore one form of this kind of ($t$)-invertibility.

Now the $t$-operation is a particular example of a star operation, and it is useful to generalize to arbitrary finite-type star operations.  Let $R$ be a domain with quotient field $K$. Denoting by $\mc F(R)$ the set of nonzero fractional ideals of $R$, a map $\st:\mc F(R) \to \mc F(R)$ is a \emph{star operation on $R$} if the following conditions hold for all $A, B \in \mc F(R)$  and all $c \in K \setminus (0)$: \begin{enumerate}
\item $(cA)^{\st}=cA^{\st}$ and $R^{\st}=R$; 
\item $A \subseteq A^{\st}$, and, if $A \subseteq B$, then $A^{\st} \subseteq B^{\st}$; and 
\item $A^{\st \st}=A^{\st}$.
\end{enumerate}

An ideal $I$ satisfying $I^{\st}=I$ is called a $\st$-ideal. Other than the $d$-operation ($I^d=I$ for all nonzero fractional ideals $I$), the best known star operation is the \emph{$v$-operation}: for $I \in \mc F(R)$, put $I^{-1}=\{x \in K \mid xI \subseteq R\}$ and $I^v=(I^{-1})^{-1}$.   For any star operation $\st$, we may define an associated star operation $\st_f$ merely by setting, for $I \in \mc F(R)$, $I^{\st_f}=\bigcup  J^{\st}$, where the union is taken over all finitely generated subideals $J$ of $I$, and we say that $\st$ has \emph{finite type} if $\st=\st_f$.  The \emph{$t$-operation} is then given by $t=v_f$. It is well known that for a finite-type star operation $\st$ on a domain $R$, each $\st$-ideal is contained in a \emph{maximal} $\st$-ideal, that is, a star ideal maximal in the set of $\st$-ideals; maximal $\st$-ideals are prime; and $R=\bigcap R_P$, where the intersection is taken over the set of maximal $\st$-ideals $P$.  A nonzero ideal $I$ is \emph{$\st$-invertible} if $(II^{-1})^{\st}=R$.  For star operations $\st_1,\st_2$, we say that $\st_1 \le \st_2$ if $I^{\st_1} \subseteq I^{\st_2}$ for each $I \in \mc F(R)$.

Generalizing notions from \cite{aaz,acz}, we call a nonzero finitely generated ideal $I$ \emph{$\st$-rigid} if it is contained in a unique maximal $\st$-ideal and \emph{$\st$-super rigid} if, in addition, $J$ is $\st$-invertible for each finitely generated ideal $J \supseteq I$.  We say that a maximal $\st$-ideal $M$ is \emph{$\st$-potent} (\emph{$\st$-super potent}) if $M$ contains a $\st$-rigid ($\st$-super rigid) ideal and that the domain $R$ is \emph{$\st$-potent} (\emph{$\st$-super potent}) if each maximal $\st$-ideal of $R$ is $\st$-potent ($\st$-super potent).  It is clear that any domain each of whose maximal ideals is invertible is $d$-super potent, as is any valuation domain. On the other hand, a Krull domain may not (even) be $d$-potent (e.g., a polynomial ring in two indeterminates over a field) but is $t$-super potent.

For the remainder of the introduction, we assume that \emph{all star operations mentioned have finite type}.  In Section~\ref{s:super} we lay out many of the basic properties of $\st$-super potency.  In Corollary~\ref{c:tsp} we show that if $\st_1 \le \st_2$ and $R$ is $\st_1$-super potent, then it is also $\st_2$-super potent; in particular, since $d \le \st \le t$ for all (finite-type) $\st$, we have that $t$-super potency is the weakest type of super potency.  In Theorem~\ref{t:spchar} we obtain a local characterization: $R$ is $\st$-super potent if and only if $R$ is $\st$-potent and $R_M$ is $d$-super potent for each maximal $\st$ ideal $M$ of $R$.  In Theorem~\ref{t:observations} we establish, among other things, that if $I$ is a $\st$-super rigid ideal, then $\bigcap_{n=1}^{\infty} (I^n)^{\st}$ is prime. In Section~\ref{s:local} we study local super potency and show that a (non-field) local domain $(R,M)$ is $d$-super potent if and only there is a prime ideal $P \subsetneq M$ for which $P=PR_P$ and $R/P$ is a valuation domain. In a brief Section~\ref{s:poly}, we show that $t$-potency and $t$-super potency extend from $R$ to the polynomial ring $R[X]$.  Section~\ref{s:pullbacks} is devoted to determining how $t$-potency and $t$-super potency behave in a commonly studied type of pullback diagram, and these results are used to provide several examples.  In Section~\ref{s:tdim1} the main result is a characterization of Ribenboim's \emph{generalized Krull domains} \cite{ri}, those domains that may be expressed as a locally finite intersection of essential rank-one valuation domains: the domain $R$ is a generalized Krull domain if and only if it is $t$-super potent and every maximal $t$-ideal of $R$ has height one.


\section{Basic results on $\st$-super potency} \label{s:super}

From now on, we use $R$ to denote a domain and $K$ to denote its quotient field.  We begin by repeating the definition of $\st$-(super) potency.

\begin{definition} \label{d:(super) rigid} Let $\st$ be a finite-type star operation on the domain $R$.  Call a finitely generated ideal $I$ of $R$ \emph{$\st$-rigid} if it is contained in exactly one maximal $\st$-ideal of $R$ and \emph{$\st$-super rigid} if, in addition, each finitely generated ideal $J \supseteq I$ is $\st$-invertible.  We then say that a maximal $\st$-ideal of $R$ is \emph{$\st$-potent} (\emph{$\st$-super potent}) if it contains a $\st$-rigid ($\st$-super rigid) ideal and that $R$ itself is \emph{$\st$-potent} (\emph{$\st$-super potent}) if each maximal $\st$-ideal of $R$ is $\st$-potent ($\st$-super potent).
\end{definition}

\begin{remark} Recall that for a star operation $\st$ on $R$, a $\st$-ideal $A$ is said to have \emph{finite type} if $A=B^{\st}$ for some finitely generated ideal $B$ of $R$.  In \cite{aaz} a finite type $t$-ideal $J$ was dubbed \emph{rigid} if it is contained in exactly one maximal $t$-ideal.  For such a $J$, we have $J=I^t$ for some finitely generated subideal $I$ of $J$, and, since it is more convenient to work with the subideal $I$, we apply the ``rigid'' terminology to $I$ instead of $J$.  Moreover,  we want to consider finite-type star operations other than the $t$-operation (e.g., the $d$-operation!), and therefore prefer ``$t$-rigid'' in place of ``rigid.''  Similarly, we replace ``potent'' with ``$t$-potent.''
\end{remark}

In \cite{wm,wm2} Wang and McCasland studied the $w$-operation in the context of strong Mori domains.  Motivated by this, Anderson and Cook associated to any star operation $\st$ on a domain $R$ a finite-type star operation $\st_w$, given by $A^{\st_w}= \{x \in K \mid xB \subseteq A$ \text{for some finitely generated ideal B of}  $R$ with $B^{\st}=R\}$ \cite{ac}.  We always have $A^{\st_w}=\bigcap \{AR_P \mid P \in \st_f \text{-Max}(R)\}$ \cite[Corollary 2.10]{ac}, from which it follows that $A^{\st_w}R_P=AR_P$ for each $P \in \st_f \text{-Max}(R)$. (Recall that $\st_f$ is the finite-type star operation associated to $\st$ given by $A^{\st_f}=\bigcup B^{\st}$, where the union is taken over all finitely generated subideals $B$ of $A$.)  We also have $\st_f \text{-Max}(R) = \st_w \text{-Max}(R)$ \cite[Theorem 2.16]{ac}.  For the ``original'' $w$-operation, we have $w=v_w=t_w$ (hence the notation $\st_w$).

The following is an easy consequence of the definitions.

\begin{proposition} \label{p:2star}  Let $\st_1,\st_2$ be finite-type star operations on a domain $R$ for which $\st_1$-{\rm Max($R$)} $=$ $\st_2$-{\rm Max($R$)}.  Then $\st_1$-rigidity {\rm (}$\st_1$-potency, $\st_1$-super potency{\rm )} coincides with $\st_2$-rigidity {\rm (}$\st_2$-potency, $\st_2$-super potency{\rm )}.  In particular, $R$ is $\st_1$-potent {\rm (}$\st_1$-super potent{\rm)} if and only if $R$ is $\st_2$-potent {\rm (}$\st_2$-super potent{\rm )}. \qed
\end{proposition}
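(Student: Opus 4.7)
The plan is straightforward: unwind the definitions and reduce everything to the common fact that, for a finite-type star operation $\st$, a nonzero ideal $A$ satisfies $A^{\st}=R$ if and only if $A$ is contained in no maximal $\st$-ideal.

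First, I would observe that $\st_1$-rigidity and $\st_2$-rigidity coincide by inspection: a finitely generated ideal $I$ is $\st_j$-rigid precisely when there is exactly one member of $\st_j$-Max$(R)$ containing $I$, and by hypothesis the two sets $\st_1$-Max$(R)$ and $\st_2$-Max$(R)$ are identical.

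Next, I would establish that $\st_1$-invertibility and $\st_2$-invertibility agree for every nonzero finitely generated ideal $J$. The key remark is that for a finite-type star operation $\st$, a nonzero ideal $A$ of $R$ satisfies $A^{\st}=R$ if and only if $A$ is contained in no maximal $\st$-ideal: if $A^{\st}\neq R$, then $A^{\st}$ is a proper $\st$-ideal and hence sits inside some maximal $\st$-ideal, while the converse is immediate since maximal $\st$-ideals are $\st$-ideals. Applying this to $A=JJ^{-1}$ shows that $(JJ^{-1})^{\st_1}=R$ iff $JJ^{-1}$ lies in no maximal $\st_1$-ideal iff (by hypothesis) $JJ^{-1}$ lies in no maximal $\st_2$-ideal iff $(JJ^{-1})^{\st_2}=R$. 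Combined with the first step, this yields that $\st_1$-super rigidity and $\st_2$-super rigidity coincide.

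Finally, since $\st_j$-potency (resp.\ $\st_j$-super potency) of a maximal $\st_j$-ideal $M$ is defined purely in terms of the existence inside $M$ of a $\st_j$-rigid (resp.\ $\st_j$-super rigid) ideal, and since the relevant notions of rigidity, super rigidity, and maximal $\st$-ideal have been shown to coincide, the potency and super potency assertions follow immediately. The ``in particular'' statement is then just the global version of this, quantifying over all maximal $\st_1$-ideals $=$ maximal $\st_2$-ideals. There is no real obstacle; the only point worth stating carefully is the characterization of $\st$-invertibility via maximal $\st$-ideals, and it is standard.
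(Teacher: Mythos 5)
Your argument is correct and is exactly the unwinding of definitions that the paper has in mind (the paper states this proposition without proof, calling it ``an easy consequence of the definitions''). The one substantive point --- that $(JJ^{-1})^{\st}=R$ if and only if $JJ^{-1}$ lies in no maximal $\st$-ideal, so that $\st$-invertibility of finitely generated ideals depends only on $\st$-$\mathrm{Max}(R)$ --- is handled correctly, and everything else reduces to the hypothesis that the two sets of maximal $\st$-ideals coincide.
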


Observe that Proposition~\ref{p:2star} may be applied to $\st$ and $\st_w$ for any finite-type star operation $\st$ on a domain $R$. In particular, the proposition may be applied to $t$- and $w$-operations.

\begin{proposition} \label{p:increase} Let $\st_1 \le \st_2$ be finite-type star operations on a domain $R$.  If $M \in \st_1$-{\rm Max($R$)} $\cap$ $\st_2$-{\rm Max($R$)} and $M$ is $\st_1$-potent, then $M$ is $\st_2$-potent.
\end{proposition}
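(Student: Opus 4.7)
The plan is to show that any $\st_1$-rigid ideal contained in $M$ is automatically $\st_2$-rigid, so the same witness works for both star operations. Let $I\subseteq M$ be a finitely generated $\st_1$-rigid ideal; I claim $I$ is $\st_2$-rigid.

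The crucial observation is the following general fact: if $\st_1\le\st_2$, then every $\st_2$-ideal is a $\st_1$-ideal. Indeed, for any $A\in\mc F(R)$ with $A^{\st_2}=A$, the chain $A\subseteq A^{\st_1}\subseteq A^{\st_2}=A$ forces $A^{\st_1}=A$. In particular, every maximal $\st_2$-ideal $N$ is a (proper) $\st_1$-ideal, and hence, because $\st_1$ is of finite type, $N$ is contained in some maximal $\st_1$-ideal $M'$ of $R$.

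Now suppose $N$ is a maximal $\st_2$-ideal containing $I$. By the previous paragraph, $N\subseteq M'$ for some maximal $\st_1$-ideal $M'$, and since $I\subseteq N\subseteq M'$, the $\st_1$-rigidity of $I$ gives $M'=M$. So $N\subseteq M$. But $M$ is itself a maximal $\st_2$-ideal, and maximal $\st_2$-ideals are pairwise incomparable (a proper $\st_2$-containment between them would contradict the maximality of the smaller one). Therefore $N=M$, showing that $M$ is the only maximal $\st_2$-ideal containing $I$. Thus $I$ is $\st_2$-rigid, and $M$ is $\st_2$-potent.

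There is no real obstacle here; the argument is essentially bookkeeping. The only point that needs care is the invocation that every $\st_2$-ideal is a $\st_1$-ideal, since one might naively expect the direction of containment between maximal ideal-sets to go the other way; writing out $A\subseteq A^{\st_1}\subseteq A^{\st_2}=A$ makes this transparent and drives the rest of the proof.
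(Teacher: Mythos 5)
Your proof is correct and follows essentially the same route as the paper's: both take a $\st_1$-rigid $I\subseteq M$, show any maximal $\st_2$-ideal $N\supseteq I$ sits inside a maximal $\st_1$-ideal (the paper via $N^{\st_1}\subseteq N^{\st_2}\ne R$, you via the observation that $\st_2$-ideals are $\st_1$-ideals), and conclude $N=M$ by rigidity plus incomparability of maximal $\st_2$-ideals. No issues.
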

\begin{proof} Let $M \in \st_1$-{\rm Max($R$)} $\cap$ $\st_2$-{\rm Max($R$)} with $M$ $\st_1$-potent, and let $I$ be a $\st_1$-rigid ideal contained in $M$. Suppose that $I \subseteq N$ for some maximal $\st_2$-ideal $N$ of $R$.  Since $N^{\st_1} \subseteq N^{\st_2} \ne R$, there is a maximal $\st_1$-ideal $N'$ of $R$ for which $N \subseteq N'$.  Since $I \subseteq N'$, this forces $N'=M$ and hence $N=M$.  Therefore, $I$ is also $\st_2$-rigid, and hence $M$ is $\st_2$-potent.
\end{proof}

With respect to Proposition~\ref{p:increase}, it is not true that for finite-type star operations $\st_1 \le \st_2$ on a domain $R$ and $M \in \st_1$-{\rm Max($R$)} $\cap$ $\st_2$-{\rm Max($R$)} with $M$ $\st_2$-potent, we must also have $M$ $\st_1$-potent--see Example~\ref{e:increase} below. 
It is also not the case that for $\st_1 \le \st_2$ and $M$ a $\st_1$-potent maximal $\st_1$-ideal, we must have that $M$ is a maximal $\st_2$-ideal.  (Let $R=k[x,y]$, a polynomial ring in two variables over a field.  Then $M=(x,y)$ is a $d$-potent maximal ($d$)-ideal but is not a $t$-ideal.) More interestingly, it is not the case that, for $\st_1 \le \st_2$, $R$ $\st_1$-potent implies $R$ $\st_2$-potent, as we show in Example~\ref{e:not potent} below.

The situation is better for super potency:

\begin{theorem} \label{t:increase2} Let $\st_1 \le \st_2$ be finite-type star operations on a domain $R$.  If $M$ is  a $\st_1$-super potent maximal $\st_1$-ideal of $R$, then $M$ is also a $\st_2$-super potent maximal $\st_2$-ideal of $R$.
\end{theorem}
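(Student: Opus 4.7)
The plan is to show that the very same finitely generated ideal $I \subseteq M$ that witnesses $\st_1$-super rigidity also witnesses $\st_2$-super rigidity, and that $M$ is in fact a maximal $\st_2$-ideal.  A useful general observation is that, since $\st_1 \le \st_2$, every $\st_2$-ideal is automatically a $\st_1$-ideal: if $A^{\st_2}=A$, then $A \subseteq A^{\st_1} \subseteq A^{\st_2}=A$.  This will let me reduce questions about proper $\st_2$-ideals to facts about proper $\st_1$-ideals, where the maximality of $M$ can be invoked.

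The main step, which I expect to be the hardest, is to show that $M$ itself is a $\st_2$-ideal, i.e.\ that $M^{\st_2} \ne R$.  Suppose for contradiction that $M^{\st_2}=R$.  Since $\st_2$ has finite type, there exists a finitely generated ideal $B \subseteq M$ with $B^{\st_2}=R$.  Set $J := I+B$, a finitely generated ideal with $I \subseteq J \subseteq M$.  Super rigidity of $I$ forces $J$ to be $\st_1$-invertible, so $(JJ^{-1})^{\st_1}=R$.  On the other hand $J \supseteq B$ gives $J^{\st_2} \supseteq B^{\st_2}=R$, and the standard identity $J^{-1}=(J^{\st_2})^{-1}$ (which is immediate from the axiom $(xJ)^{\st_2}=xJ^{\st_2}$) yields $J^{-1}=R^{-1}=R$.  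Hence $JJ^{-1}=J$, so $(JJ^{-1})^{\st_1}=J^{\st_1} \subseteq M^{\st_1}=M \ne R$, a contradiction.  Therefore $M^{\st_2}$ is a proper $\st_2$-ideal, hence a proper $\st_1$-ideal, and the maximality of $M$ among $\st_1$-ideals forces $M^{\st_2}=M$.

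Once $M$ is known to be a $\st_2$-ideal, the remaining steps are routine sandwiching arguments.  Any maximal $\st_2$-ideal $N$ containing $M$ is a proper $\st_1$-ideal, hence lies inside some maximal $\st_1$-ideal $N'$; since $M \subseteq N'$ and $M$ is itself a maximal $\st_1$-ideal, $N'=M$, whence $N=M$, showing that $M$ is a maximal $\st_2$-ideal.  The same device shows $M$ is the unique maximal $\st_2$-ideal containing $I$: any such $N$ sits in some maximal $\st_1$-ideal, which by $\st_1$-rigidity of $I$ must be $M$, so $N \subseteq M$ and then $N=M$.  Finally, for any finitely generated $J \supseteq I$, $\st_1$-invertibility gives $(JJ^{-1})^{\st_2} \supseteq (JJ^{-1})^{\st_1}=R$, so $J$ is $\st_2$-invertible.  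This confirms that $I$ is $\st_2$-super rigid and that $M$ is $\st_2$-super potent.
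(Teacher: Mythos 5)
Your proposal is correct and follows essentially the same route as the paper: both prove $M^{\st_2}\ne R$ by contradiction using the finitely generated ideal $I+B$ with $B^{\st_2}=R$, deduce $(I+B)^{-1}=R$ from $\st_1$-invertibility (the paper via $(C^{\st_2}C^{-1})^{\st_2}=(CC^{-1})^{\st_2}\supseteq R$, you via the identity $J^{-1}=(J^{\st_2})^{-1}$, which amounts to the same thing), and then conclude maximality of $M$ as a $\st_2$-ideal and $\st_2$-super rigidity of $I$ by the same sandwiching arguments. No gaps.
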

\begin{proof} Let $M$ be a $\st_1$-super potent maximal $\st_1$-ideal of $R$, and let $A \subseteq M$ be a $\st_1$-super rigid ideal.  We first show that $M^{\st_2} \ne R$.  If, on the contrary, $M^{\st_2}=R$, then there is a finitely generated ideal  $B \subseteq M$ with $B^{\st_2}=R$. Let $C:=A+B$.  Then $C$ is $\st_1$-invertible, whence $(C^{\st_2}C^{-1})^{\st_2}=(CC^{-1 })^{\st_2} \supseteq (CC^{-1})^{\st_1}=R$.  Since $C^{\st_2}=R$, this yields $C^{-1} = (C^{-1})^{\st_2} = R$.  However, the equation $(CC^{-1})^{\st_1}=R$ then forces $C^{\st_1}=R$, the desired contradiction.  Thus $M^{\st_2} \ne R$ and, since $M^{\st_2}$ is a $\st_1$-ideal, we must have $M^{\st_2}=M$.  Then, again since $\st_2$ ideals are also $\st_1$-ideals, it must be the case that $M$ is a maximal $\st_2$-ideal. That $M$ must be $\st_2$-super potent now follows easily, since for any finitely generated ideal $I \supseteq A$, $\st_1$-invertibility of $I$ implies $\st_2$-invertiblity.
\end{proof}
%

As a consequence of the preceding result, we have that the weakest type of super potency is $t$-super potency:

\begin{corollary} \label{c:tsp} Let $\st$ be a finite-type star operation on a domain $R$. \begin{enumerate}
\item If $M$ is a $\st$-super potent maximal $\st$-ideal of $R$, then $M$ is a $t$-super potent maximal $t$-ideal of $R$.
\item If $R$ is $\st$-super potent, then $R$ is $t$-super potent. \qed
\end{enumerate}
\end{corollary}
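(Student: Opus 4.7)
The plan is to reduce both parts to a direct application of Theorem~\ref{t:increase2} with $\st_2 = t$. The crucial input is the standard inequality $\st \le t$ for every finite-type star operation $\st$: since $\st \le v$ holds for every star operation, one has $\st = \st_f \le v_f = t$, so $t$ is the largest finite-type star operation on $R$.

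For part (1), I would simply invoke Theorem~\ref{t:increase2} with $\st_1 = \st$ and $\st_2 = t$. The hypothesis ``$M$ is a $\st_1$-super potent maximal $\st_1$-ideal of $R$'' is precisely what is given, and the conclusion ``$M$ is a $\st_2$-super potent maximal $\st_2$-ideal of $R$'' is precisely what is to be shown. So part (1) requires no additional work.

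Part (2) is not quite a direct ideal-by-ideal corollary of (1), because $\st$-super potency of $R$ asserts super potency at every maximal $\st$-ideal, whereas $t$-super potency of $R$ demands it at every maximal $t$-ideal; so I must show these two families coincide. For this, I would use the elementary consequence of $\st \le t$ that every $t$-ideal is a $\st$-ideal (if $I = I^t$ then $I \subseteq I^\st \subseteq I^t = I$). Thus, given an arbitrary maximal $t$-ideal $M$ of $R$, the ideal $M$ is a $\st$-ideal and so lies inside some maximal $\st$-ideal $N$. By hypothesis $N$ is $\st$-super potent, and by part (1) $N$ is then itself a maximal $t$-ideal. Maximality of $M$ among $t$-ideals forces $M = N$, so $M$ inherits $t$-super potency from $N$, completing (2).

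The argument is essentially formal and I do not anticipate a real obstacle; the one point to attend to carefully is the observation in the previous paragraph that, under the $\st$-super potency hypothesis, every maximal $t$-ideal is already a maximal $\st$-ideal — without this, (2) would not follow merely by quoting (1).
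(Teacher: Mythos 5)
Your proposal is correct and follows exactly the route the paper intends: the corollary is stated with its proof left as an immediate consequence of Theorem~\ref{t:increase2} applied with $\st_1=\st$ and $\st_2=t$, using $\st=\st_f\le v_f=t$. Your extra care in part (2) --- checking that under the hypothesis every maximal $t$-ideal is already a maximal $\st$-ideal --- is precisely the detail the paper leaves implicit, and your argument for it is sound.
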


The converse of Corollary~\ref{c:tsp}(2) is false: if $k$ is a field, then the polynomial ring $k[X,Y]$, being a Krull domain, is $t$-super potent but is not $d$-super potent.  However, we do not know whether one can have a maximal ideal $M$ of a domain such that $M$ is a $t$-super potent maximal $t$-ideal but is not $d$-super potent.

Now let $R$ be a domain and $T$ a flat overring of $R$.  According to \cite[Proposition 3.3]{s}, if $\st$ is a finite-type star operation on $R$, then the map $\st_T: IT \mapsto I^{\st}T$ is a well-defined finite-type star operation on $T$.    
In the following result, we study how (super) potency extends to flat overrings.  We assume standard facts about flat overrings (including the fact, used above, that each fractional ideal of $T$ is extended from a fractional ideal of $R$); these follow readily from \cite{r}.

\begin{lemma} \label{l:flat} Let $R$ be a domain, $T$ a flat overring of $R$, $\st$ a finite-type star operation on $R$, and $\mathcal P$ the set of $\st$-primes $P$ of $R$ maximal with respect to the property $PT \ne T$.  Then:
\begin{enumerate}
\item $\st_T$-{\rm Max($T$)}  $=$ $\{PT \mid P \in \mathcal P\}$.
\item If $M$ is a $\st$-(super) potent maximal $\st$-ideal of $R$ for which $MT \ne T$, then $MT$ is a $\st_T$-(super) potent maximal $\st_T$-ideal of $T$.  (In fact, if $M$ is as hypothesized and $I \subseteq M$ is $\st$-(super) rigid, then $IT$ is $\st_T$-(super) rigid in $T$.)
\end{enumerate}
\end{lemma}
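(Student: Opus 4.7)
The plan is to leverage the standard bijection between prime ideals of a flat overring $T$ and primes $P$ of $R$ with $PT \ne T$, together with the compatibility of the $\st_T$-construction with extension and contraction.

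For (1), I would start with a maximal $\st_T$-ideal $N$ of $T$ and use the fact that every fractional ideal of $T$ is extended from $R$ to write $N = (N \cap R)T$; setting $P = N \cap R$, one has $N = PT$. From $(PT)^{\st_T} = P^{\st}T$ and $(PT)^{\st_T} = PT$, contracting back to $R$ gives $P^{\st} \subseteq (P^{\st}T) \cap R = P$, so $P$ is a $\st$-prime. Maximality of $P$ in $\mc P$ follows because any $\st$-prime $Q \supsetneq P$ with $QT \ne T$ would make $QT$ a $\st_T$-ideal strictly containing $N$. Conversely, for $P \in \mc P$, the computation $(PT)^{\st_T} = P^{\st}T = PT$ shows $PT$ is a proper prime $\st_T$-ideal; and if $PT \subsetneq N'$ for some $\st_T$-ideal $N' \ne T$, then $N' \cap R$ would be a $\st$-prime strictly containing $P$ with extension $\ne T$, contradicting the maximality of $P$ in $\mc P$.

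For (2), fix a $\st$-(super) rigid $I \subseteq M$. Since $M \in \mc P$ (being itself a maximal $\st$-ideal with $MT \ne T$), part (1) identifies $MT$ as a maximal $\st_T$-ideal. To see $IT$ is $\st_T$-rigid, suppose $IT \subseteq N = P'T$ for some $P' \in \mc P$. Then $I \subseteq P'T \cap R = P'$, so $P'$ is a $\st$-prime containing $I$; the $\st$-rigidity of $I$ forces $P'$ to lie inside the unique maximal $\st$-ideal above it, namely $M$, and maximality of $P'$ in $\mc P$ together with $M \in \mc P$ then yields $P' = M$. For super rigidity, given a finitely generated $J \supseteq IT$ in $T$, we can write $J = LT$ for some finitely generated fractional ideal $L$ of $R$ (take $L$ to be the $R$-module generated by any set of $T$-generators of $J$); replacing $L$ by $L + I$ we may assume $L \supseteq I$ while keeping $LT = J$ (since $IT \subseteq J$). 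Then $\st$-super rigidity of $I$ makes $L$ $\st$-invertible, and the flat-overring identity $(LT)^{-1} = L^{-1}T$ combined with $(XT)^{\st_T} = X^{\st}T$ gives $(JJ^{-1})^{\st_T} = (LL^{-1})^{\st}T = T$, so $J$ is $\st_T$-invertible.

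The main subtlety to watch is ensuring that the $\st$-super-rigidity hypothesis on $I$ applies to finitely generated \emph{fractional} $L \supseteq I$ rather than only integral ones, and that the flat-overring identity $(LT)^{-1} = L^{-1}T$ is indeed available for finitely generated $L$; both points follow from the standard identity $(A : B)T = AT : BT$ for finitely generated $B$ under the flat extension $R \to T$, together with the definition of $\st_T$ given just before the lemma.
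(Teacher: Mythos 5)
Part~(1) and the $\st_T$-rigidity half of part~(2) are correct and essentially the paper's own argument. (One small imprecision in (1): when you test maximality of $PT$ for $P \in \mc P$, you should take $N'$ to be a \emph{maximal} $\st_T$-ideal containing $PT$ --- these exist because $\st_T$ has finite type --- so that $N' \cap R$ is actually prime; for an arbitrary $\st_T$-ideal $N'$ the contraction is only a $\st$-ideal.)

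The super-rigidity step, however, has a genuine gap, and it is precisely the point you flagged and then dismissed. Taking $L$ to be the $R$-module generated by a set of $T$-generators of $J$ produces a finitely generated \emph{fractional} ideal of $R$ (the generators lie in $T$, not in $R$), and the definition of $\st$-super rigidity only promises $\st$-invertibility for finitely generated \emph{integral} ideals containing $I$. The identity $(A:B)T=(AT:BT)$ does not bridge this, and the extension to fractional ideals is simply false: in $R=\mathbb Z_{(p)}+(x,y)\mathbb Q[[x,y]]$, which is local and $d$-super potent with $I=pR$ a $d$-super rigid ideal (Theorem~\ref{t:val} and Remark~\ref{r:gmz}(3)), the fractional ideal $L=pR+px^{-1}yR=px^{-1}(xR+yR)$ is finitely generated and contains $I$, yet is not invertible, because $(xR+yR)\mathbb Q[[x,y]]=(x,y)\mathbb Q[[x,y]]$ is not principal. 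The same failure can occur for your $L+I$: at a maximal $\st$-ideal $N$ of $R$ with $NT=T$ one only gets that $(L+I)R_N$ is a finitely generated fractional ideal of $R_N$ containing $R_N$, which need not be principal, so $L+I$ need not be $\st$-invertible even though its extension $J$ is $\st_T$-invertible. The repair is the one the paper uses: since $T$ is flat, $J=(J\cap R)T$, so each $T$-generator of $J$ is a $T$-combination of finitely many elements of the integral ideal $J\cap R$; the ideal $J_0$ of $R$ generated by those finitely many elements is integral, finitely generated, and satisfies $J_0T=J$, whence $J_0+I$ is an integral finitely generated ideal containing $I$ to which super rigidity genuinely applies. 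From there either your concluding computation or the paper's (namely $J_0J_0^{-1}\nsubseteq M$, hence $(J)(T:J)\nsubseteq MT$, and $MT$ is the only maximal $\st_T$-ideal containing $J$) finishes the proof.
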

\begin{proof}  Let $P \in \mc P$.  Then $(PT)^{\st_T} = P^{\st}T=PT$, that is, $PT$ is a $\st_T$-ideal of $T$.  Moreover, if $Q$ is a prime of $R$ for which $QT$ is a maximal $\st_T$-ideal of $T$ containing $PT$, then $Q^{\st} \subseteq Q^{\st}T \cap R= (QT)^{\st_T} \cap R=QT \cap R=Q$; that is, $Q$ is a $\st$-ideal of $R$ containing $P$.  Since $P \in \mc P$, we have ($Q=P$ and hence) $QT=PT$.  Therefore $PT$ is a maximal $\st_T$-ideal of $T$.  Conversely, let $P$ be a prime of $R$ for which $PT$ is a maximal $\st_T$-ideal of $T$.  Then $P^{\st} \subseteq P^{\st}T \cap R = (PT)^{\st_T} \cap R=PT \cap R=P$, and so $P$ is a $\st$-ideal of $R$.  Suppose that $P \subseteq Q$, where $Q$ is a $\st$-prime of $R$ and $QT \ne T$.  Then $QT$ is a $\st_T$-ideal of $T$ (since, $(QT)^{\st_T}=Q^{\st}T=QT$) containing $PT$, whence ($QT=PT$ and hence) $Q=P$.  This proves (1).

Let $M$ be a $\st$-potent maximal $\st$-ideal of $R$ such that $MT \ne T$.  Then $MT$ is a maximal $\st_T$-ideal of $T$ by (1).  Now let $I$ be a $\st$-rigid ideal contained in $M$, and suppose that $IT \subseteq NT$, where $N$ is a prime ideal of $R$ for which $NT$ is a maximal $\st_T$-ideal of $T$.  Then $N^{\st} \ne R$, whence $N \subseteq N'$ for some maximal $\st$-ideal $N'$ of $R$. Since $I$ is contained in no maximal $\st$-ideal of $R$ other than $M$, we must have $N'=M$. However, this yields $N \subseteq M$ and hence $NT=MT$.  It follows that $IT$ is $\st_T$-rigid in $T$.

Now assume that $M$ is $\st$-super potent and that $I \subseteq M$ is $\st$-super rigid.  Let $J$ be a finitely generated ideal of $R$ for which $JT \supseteq IT$.  Replacing $J$ with $I+J$ if necessary, we may assume that $J \supseteq I$.  Then $J$ is $\st$-invertible, whence, in particular, $JJ^{-1} \nsubseteq M$.  This, in turn,  yields $(JT)(T:JT) \nsubseteq MT$.  Since $MT$ is the only maximal $\st_T$-ideal of $T$ containing $JT$, $JT$ is $\st$-invertible.  Therefore, $IT$ is $\st_T$-super rigid.
This completes the proof of (2).
\end{proof}

\begin{remark} \label{r:principal} Suppose that $(R,M)$ is local and that $\st$ is a star operation on $R$ for which $M$ is a $\st$-ideal. Then if $I$ is a $\st$-invertible ideal of $R$, we cannot have $II^{-1} \subseteq M$, and hence $I$ is actually (invertible and hence) principal.  In particular, if $\st$ is of finite-type and $I \subseteq M$ is $\st$-super rigid, then $I$ is principal.  We shall use this fact often in the sequel.
\end{remark}

\begin{lemma} \label{l:locsp} Let $(R,M)$ be a local domain.  The following statements are equivalent. \begin{enumerate}
\item $M$ is a $\st$-super potent maximal $\st$-ideal for some finite-type star operation $\st$ on $R$.
\item $M$ is a $t$-super potent maximal $t$-ideal.
\item $M$ is $d$-super potent.
\item $M$ is a $\st$-super potent maximal $\st$-ideal for every finite-type star operation on $R$.
\end{enumerate}
\end{lemma}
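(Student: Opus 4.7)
The equivalence cascades through several immediate implications plus two substantive ones. Since $d$ and $t$ are themselves finite-type star operations, the implications $(4) \Rightarrow (1)$, $(4) \Rightarrow (2)$, $(4) \Rightarrow (3)$ follow by specialization, and $(2) \Rightarrow (1)$, $(3) \Rightarrow (1)$ are trivial. My plan is to close the circle by establishing the two substantive implications $(3) \Rightarrow (4)$ and $(1) \Rightarrow (3)$.

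For $(3) \Rightarrow (4)$ I would invoke Theorem~\ref{t:increase2} directly. Since $(R,M)$ is local, $M$ is the unique maximal ideal of $R$ and hence trivially a maximal $d$-ideal; moreover, every finite-type star operation $\st$ satisfies $d \le \st$. The theorem then promotes $d$-super potency of $M$ to $\st$-super potency of $M$ as a maximal $\st$-ideal, for every such $\st$.

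The heart of the matter is $(1) \Rightarrow (3)$, and the engine is Remark~\ref{r:principal}: in a local ring, any $\st$-invertible ideal sitting inside a $\st$-ideal (here $M$) is forced to be principal. So, starting from a $\st$-super rigid ideal $I \subseteq M$, I would use this to write $I = (a)$. I would then claim that $(a)$ is itself $d$-super rigid. The $d$-rigidity of $(a)$ is free from locality, since $M$ is the only maximal ideal of $R$. For the ``super'' part, given a finitely generated ideal $J \supseteq (a)$, I would dichotomize: either $J$ contains a unit and so $J = R$ (trivially $d$-invertible), or $J \subseteq M$ by locality, in which case $J$ is $\st$-invertible by the $\st$-super rigidity of $(a)$ and hence principal by Remark~\ref{r:principal}, which makes $J$ $d$-invertible.

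I do not anticipate a serious obstacle. The only place where the local hypothesis is genuinely used is the clean dichotomy for finitely generated $J \supseteq (a)$ into $J = R$ versus $J \subseteq M$; everything else is furnished by previously established results, with Remark~\ref{r:principal} doing the real work in $(1) \Rightarrow (3)$ and Theorem~\ref{t:increase2} doing it in $(3) \Rightarrow (4)$.
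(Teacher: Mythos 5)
Your proposal is correct and follows essentially the same route as the paper: Theorem~\ref{t:increase2} supplies the upward implication $(3)\Rightarrow(4)$, and Remark~\ref{r:principal} converts $\st$-invertibility into principality in the local ring to get back down to $d$-super rigidity. The only cosmetic difference is that you prove $(1)\Rightarrow(3)$ directly for an arbitrary finite-type $\st$, whereas the paper routes through $(1)\Rightarrow(2)\Rightarrow(3)$ and applies the identical principality argument with $\st=t$.
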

\begin{proof}  The implications (1) $\ra$ (2) and (3) $\ra$ (4) follow from Theorem~\ref{t:increase2}, and (4) $\ra$ (1) is trivial.  Assume (2), and let $A$ be a $t$-super rigid ideal contained in $M$ and $B$ a finitely generated ideal containing $A$. Then $B$ is $t$-invertible and hence principal (Remark~\ref{r:principal}).  Therefore, $A$ is $d$-super rigid, as desired.
\end{proof}

Since the extension (as defined above) of the $d$-operation on $R$ to a flat overring $T$ is the $d$-operation on $T$, we shall write ``$d$'' instead of ``$d_T$'' in this case.

It is now an easy matter to characterize $\st$-super potency locally:

\begin{theorem} \label{t:spchar} Let $\st$ be a finite-type star operation on $R$ and $M$ a maximal $\st$-ideal of $R$.  Then: \begin{enumerate}
\item $M$ is $\st$-super potent if and only if $M$ is $\st$-potent and $MR_M$ is $d$-super potent.
\item $R$ is $\st$-super potent if and only if $R$ is $\st$-potent and $R_P$ is $d$-super potent for each maximal $\st$-ideal $P$ of $R$.
\end{enumerate}
\end{theorem}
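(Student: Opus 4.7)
The plan is to reduce (1) to the local case via the flat overring $R_M$, using Lemma~\ref{l:flat}(2) in the forward direction and Lemma~\ref{l:locsp} to translate between finite-type star operations once we are inside a local ring. Part (2) will then be an immediate consequence of (1), applied one maximal $\st$-ideal at a time.

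For the forward direction of (1), if $M$ is $\st$-super potent, then $M$ is $\st$-potent by definition. Applying Lemma~\ref{l:flat}(2) with $T = R_M$ (noting that $MR_M \ne R_M$) yields that $MR_M$ is a $\st_{R_M}$-super potent maximal $\st_{R_M}$-ideal of the local domain $R_M$; Lemma~\ref{l:locsp} then forces $MR_M$ to be $d$-super potent.

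The reverse direction of (1) carries the technical weight. I would start from a $\st$-rigid ideal $I \subseteq M$ (given by $\st$-potency) and, by Remark~\ref{r:principal} applied in the local ring $R_M$, choose $a \in M$ so that $aR_M$ is a $d$-super rigid principal ideal of $R_M$. The candidate for a $\st$-super rigid ideal inside $M$ is $A := I + (a)$. Since $A \supseteq I$, $A$ is immediately $\st$-rigid. For any finitely generated $J \supseteq A$, I would show $(JJ^{-1})^{\st} = R$ by checking that $JJ^{-1}$ escapes every maximal $\st$-ideal of $R$. For $P \neq M$, the inclusion $J \supseteq I$ gives $J \not\subseteq P$, and picking $j \in J \setminus P$ produces $j = j \cdot 1 \in JJ^{-1}$ (using $1 \in J^{-1}$), so $JJ^{-1} \not\subseteq P$. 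For $P = M$, the containment $JR_M \supseteq aR_M$ together with the $d$-super rigidity of $aR_M$ forces $JR_M$ to be invertible in $R_M$; combined with the flat-overring identity $(JR_M)^{-1} = J^{-1}R_M$ for finitely generated $J$, this gives $(JJ^{-1})R_M = R_M$, hence $JJ^{-1} \not\subseteq M$.

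Part (2) is then a bookkeeping exercise: in the forward direction, each maximal $\st$-ideal $M$ is $\st$-super potent, so by (1), $MR_M$ is $d$-super potent, which is the same as $R_M$ being $d$-super potent since $R_M$ is local. In the reverse direction, the hypotheses deliver, at each maximal $\st$-ideal $M$, exactly what (1) requires. The main obstacle is managing the interaction between $J^{-1}$ and the localization at $M$; this is the one place where flatness does nontrivial work, via the identity $(JR_M)^{-1} = J^{-1}R_M$ for finitely generated ideals $J$ (which itself reduces to the commutation of finite intersections with flat extensions, writing $(R:J) = \bigcap_{i} (1/j_i)R$ for $J = (j_1,\dots,j_n)$).
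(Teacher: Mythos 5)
Your proposal is correct and follows essentially the same route as the paper: the forward direction via Lemma~\ref{l:flat}(2) and Lemma~\ref{l:locsp}, and the reverse direction by forming the sum of a $\st$-rigid ideal with an ideal that localizes to a $d$-super rigid ideal at $M$ (the paper uses $A+B$ where your $I+(a)$ just takes the local piece principal via Remark~\ref{r:principal}), then checking $\st$-invertibility of finitely generated overideals one maximal $\st$-ideal at a time. Your explicit justification of the local step through $(JR_M)^{-1}=J^{-1}R_M$ merely spells out what the paper's ``It follows that $C$ is $\st$-invertible'' leaves implicit.
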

\begin{proof} It suffices to prove (1). Assume that $M$ is $\st$-potent and $MR_M$ is $d$-super potent.  Then there is a finitely generated ideal $A$ of $R$ such that $AR_M$ is $d$-super rigid and a $\st$-rigid ideal $B$ of $R$ contained in $M$. Suppose that $C$ is a finitely generated ideal of $R$ with $C \supseteq A+B$.  Then $CR_M$ is principal, and $CR_N=R_N$ for each maximal $\st$-ideal $N \ne M$.  It follows that $C$ is $\st$-invertible.  Thus ($A+B$ is $\st$-super rigid and hence) $M$ is $\st$-super potent.  For the converse, if $M$ is $\st$-super potent, then $M$ is certainly $\st$-potent.  Moreover, $MR_M$ is $\st_{R_M}$-super potent by Lemma~\ref{l:flat} and hence $d$-super potent by Lemma \ref{l:locsp}.
\end{proof}

In spite of Theorem~\ref{t:spchar} (and Lemma~\ref{l:flat}), $\st$-super potency does not in general localize at non-maximal $\st$-primes--see Example~\ref{e:loc} below. Also, observe that if $R$ is a non-Dedekind almost Dedekind domain, then $R_M$ is $d$-super potent for each maximal ($t$-)ideal $M$, but $R$ is not $t$-potent.  (Hence the $\st$-potency assumption is necessary in Theorem~\ref{t:spchar}(1,2).)

\begin{theorem} \label{t:observations} Let $\st$ be a finite-type star operation on a domain $R$, $M$ a $\st$-super potent maximal $\st$-ideal of $R$, and $I$ a $\st$-super rigid ideal of $R$ contained in $M$.  \begin{enumerate}
\item If $A$ is a finitely generated ideal for which $A^{\st} \supseteq I$, then $A$ is $\st$-super rigid.
\item If $J$ is a $\st$-super rigid ideal contained in $M$, then $I \subseteq J^{\st}$ or $J \subseteq I^{\st}$.
\item If $J$ is a $\st$-super rigid ideal contained in $M$, then $IJ$ is also a $\st$-super rigid ideal.
\item $I^n$ is $\st$-super rigid for each positive integer $n$.
\item If $R$ is local with maximal ideal $M$, then $I$ is comparable to each ideal of $R$, and $\bigcap_{n=1}^{\infty} I^n$ is prime. 
\item $I^{\st}=IR_M \cap R$.
\item $\bigcap_{n=1}^{\infty} (I^n)^{\st}$ is prime.
\item If $P$ is a prime ideal of $R$ with $P \subseteq M$ and $I \nsubseteq P$, then $P \subseteq \bigcap_{n=1}^{\infty} (I^n)^{\st}$.
\end{enumerate}
\end{theorem}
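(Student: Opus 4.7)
The plan is to handle (1) first, then promote (6) to come next, since $I^{\st} = IR_M \cap R$ is the key identity that lets us reduce later parts to essentially valuation-like computations in the local $d$-super potent ring $R_M$ (cf.\ Theorem~\ref{t:spchar}). For (1), $A^{\st} \supseteq I$ forces every maximal $\st$-ideal containing $A$ to contain $I$, hence to equal $M$; for super rigidity of $A$, given a finitely generated $J \supseteq A$, the inclusion $I \subseteq A^{\st} \subseteq J^{\st}$ yields $(J+I)^{\st} = J^{\st}$, so $J$ and $J+I$ have the same $\st$-closure (and the same inverse), and since $J + I \supseteq I$ is $\st$-invertible, so is $J$. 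For (6), I use Lemma~\ref{l:flat} with $T = R_M$: by the definition of $\st_{R_M}$, $I^{\st} R_M = (IR_M)^{\st_{R_M}}$, and $IR_M$ is principal (by Remark~\ref{r:principal} applied in $R_M$), hence $\st_{R_M}$-closed, giving $I^{\st} R_M = IR_M$ and the inclusion $I^{\st} \subseteq IR_M \cap R$.  For the reverse, given $x \in IR_M \cap R$, pick $s \in R \setminus M$ with $sx \in I$; the finitely generated ideal $(s) + I$ meets no maximal $\st$-ideal of $R$ (it escapes $M$ via $s$ and every other maximal $\st$-ideal by the $\st$-rigidity of $I$), so $((s) + I)^{\st} = R$, and multiplying by $x$ together with axiom (1) of star operations gives $(x) = (xs + xI)^{\st} \subseteq I^{\st}$ since $xs \in I$ and $xI \subseteq I$.

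With (6) in hand, part (2) reduces to a comparability statement in $R_M$: $IR_M = (a)$ and $JR_M = (b)$ are principal, the sum $(a,b)$ is principal by (1) applied in $R_M$, and a standard local unit argument on a relation $c = as + bt$, $a = cu$, $b = cv$ forces $(a) \subseteq (b)$ or $(b) \subseteq (a)$, which lifts to the claim via (6).  Part (3) follows by invoking (2) to assume $I \subseteq J^{\st}$: then $IJ \subseteq I$ is $\st$-rigid, and for super rigidity I prove the local statement that any finitely generated $L \supseteq (ab)$ in $R_M$ is principal, splitting on whether $L + (a) = (a)$ (so $L = a L'$ with $L' \supseteq (b)$ principal by (1) in $R_M$) or $L + (a) = (c) \supsetneq (a)$ (so $a = cd$ with $d \in MR_M$ a non-unit, and a relation $c(1 - dr) = \ell \in L$ with $1 - dr$ a unit forces $L = (c)$).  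Then $KR_M$ principal together with $KR_P = R_P$ for $P \ne M$ (since $I, J \not\subseteq P$) yields $\st$-invertibility of any finitely generated $K \supseteq IJ$.  Part (4) is then a direct induction on $n$ using (3).

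For (5), $I = (a)$ is principal by Remark~\ref{r:principal}. Comparability with an arbitrary ideal $J$ reduces, for $b \in J \setminus (a)$, to $(a, b)$ being principal (by (1)) and the familiar local unit argument forcing $(a) \subseteq (b) \subseteq J$.  For primality of $\bigcap (a)^n$, given $xy \in \bigcap (a)^n$ and $x \notin \bigcap (a)^n$, I pick the largest $k$ with $x \in (a)^k$, write $x = c a^k$ with $c \notin (a)$, and use comparability of $(a)$ with $(c)$ to force $(a) \subsetneq (c)$; then $a = ce$ gives $a^{k+1} = ex$, so for each $n$ the relation $xy \in (a)^{k+n+1}$ rewrites as $x(y - a^n e t) = 0$, and cancellation in the domain yields $y \in (a)^n$ for every $n$.

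Finally, (7) combines (4) and (6): each $I^n$ is $\st$-super rigid, so $\bigcap_n (I^n)^{\st} = \bigcap_n (I^n R_M \cap R) = \bigl( \bigcap_n (IR_M)^n \bigr) \cap R$, and (5) applied inside $R_M$ (where $IR_M$ is $d$-super rigid by Lemma~\ref{l:flat} and Lemma~\ref{l:locsp}) makes the inner intersection a prime of $R_M$, whose contraction to $R$ is prime.  For (8), the hypothesis $I \not\subseteq P \subseteq M$ gives $IR_M \not\subseteq PR_M$ (using that $P \subseteq M$ is prime), so comparability in $R_M$ forces $PR_M \subsetneq IR_M = (a)$; an induction using primality of $PR_M$ together with $a \notin PR_M$ to extract factors of $a$ from any $p = a^n c \in PR_M$ shows $PR_M \subseteq (IR_M)^n$ for every $n$, and contracting back gives $P \subseteq \bigcap (I^n)^{\st}$.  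The main obstacle is (6): the identity between a global $\st$-closure and a simple contraction from the local ring is what drives every subsequent reduction, and once (6) is secured, all of (2)--(8) become local computations in the essentially valuation-like ring $R_M$.
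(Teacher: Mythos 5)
Your proposal is correct, and all eight parts go through, but the route differs from the paper's in several places, so a comparison is worthwhile. The paper proves (1)--(5) by purely global ideal manipulations: for (2) it sets $C=I+J$ and analyzes $(IC^{-1}+JC^{-1})^{\st}=R$, and for (3) it writes $I^{-1}=A^{\st}$ with $A$ finitely generated and observes that $(CA)^{\st}\supseteq J$, so that part (1) applies to $CA$; localization at $M$ enters only in parts (6)--(8). You instead promote (6) to the front and push (2), (3), and (8) down into $R_M$, where $IR_M$ and $JR_M$ are principal $d$-super rigid ideals (via Lemma~\ref{l:flat} and Lemma~\ref{l:locsp}), so everything becomes a local unit computation with principal ideals; this is heavier on the flatness machinery but makes the valuation-like behavior of $R_M$ explicit. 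Your proof of the inclusion $IR_M\cap R\subseteq I^{\st}$ in (6) is genuinely different and more elementary than the paper's: the paper invokes $I^{\st}\supseteq I^{\st_w}=\bigcap_P IR_P$, whereas you choose $s\in R\setminus M$ with $sx\in I$, note that $((s)+I)^{\st}=R$ because $(s)+I$ lies in no maximal $\st$-ideal, and apply the axiom $(cA)^{\st}=cA^{\st}$ to get $xR=(xsR+xI)^{\st}\subseteq I^{\st}$; this avoids the $\st_w$-operation entirely. Parts (1), (4), (5), and (7) are essentially the paper's arguments (your direct proof of primality in (5) via cancellation replaces the paper's proof by contradiction, but it is the same idea). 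Two cosmetic points: the appeal to (2) at the start of your (3) is unnecessary, since $\st$-rigidity of $IJ$ already follows from the primality of maximal $\st$-ideals; and ``meets no maximal $\st$-ideal'' in your treatment of (6) should read ``is contained in no maximal $\st$-ideal.''
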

\begin{proof} (1) Let $A$ be a finitely generated ideal with $A^{\st} \supseteq I$.  Then $A$ is clearly $\st$-rigid.  Let  $B$ be a finitely generated ideal with $B \supseteq A$. Set $C:=I+B$.  Then $C$ is $\st$-invertible, and, since $C^{\st} = (I^{\st} +B^{\st})^{\st} \subseteq (A^{\st} +B^{\st})^{\st} = B^{\st}$, we have $C^{\st}= B^{\st}$, and hence $B$ is $\st$-invertible.  Therefore, $A$ is $\st$-super rigid.

(2) Let $J$ be a $\st$-super rigid ideal contained in $M$, and set $C:=I+J$.  Then $C$ is $\st$-invertible, and we have $(IC^{-1}+JC^{-1})^{\st}=R$.  Note that $IC^{-1} \supseteq I$ and $JC^{-1} \supseteq J$, and hence $IC^{-1} \nsubseteq M$ or $JC^{-1} \nsubseteq M$.  Since $IC^{-1}, JC^{-1}$ can be contained in no maximal $\st$-ideal of $R$ other than $M$, we must have $(IC^{-1})^{\st}=R$ or $(JC^{-1})^{\st}=R$, that is, $C^{\st}=I^{\st}$ or $C^{\st}=J^{\st}$. The conclusion follows easily.

(3) Again, let $J$ be a $\st$-super rigid ideal contained in $M$, and let $C$ be a finitely generated ideal containing $IJ$.    Since $I$ is $\st$-invertible, $I^{-1}=A^{\st}$ for some finitely generated ideal $A$. This yields $(CA)^{\st} \supseteq (IJA)^{\st} =J^{\st} \supseteq J$, and hence $CA$ is $\st$-invertible.  It follows that $C$ is $\st$-invertible.

(4) This follows from (3).

(5) Assume that $R$ is local with maximal ideal $M$.  By Lemma~\ref{l:locsp} (and its proof) $I$ is $d$-super rigid and therefore principal (Remark~\ref{r:principal}), say $I=(c)$. Choose $r \in M \setminus (c)$.  Then $(c,r)$ is principal, and, since $R$ is local, $(c,r)=(r)$, i.e. $c \in (r)$.  It follows that $I$ is comparable to each ideal of $R$. Now suppose, by way of contradiction, that $a,b \in R$ with $ab \in \bigcap (c^n)$ and $a,b \notin \bigcap (c^n)$. Choose $n,m$ with $a \in (c^n) \setminus (c^{n+1})$ and $b \in (c^m) \setminus (c^{m+1})$.  Then $a/c^n, b/c^m \notin (c)$, whence, by the claim, $c \in (a/c^n) \cap (b/c^m)$.  Hence $c^{n+m+2} \in (ab) \subseteq (c^{n+m+3})$, yielding the contradiction that $1 \in (c)$.  Hence $\bigcap_{n=1}^{\infty} I^n$ is prime.

(6) We have $I^{\st} \subseteq I^{\st}R_M \cap R = (IR_M)^{\st_{R_M}} \cap R = IR_M \cap R$ (since $IR_M$ is principal).  On the other hand, $IR_N=R_N$ for $N \in \mc N:=\st\text{-Max}(R) \setminus \{M\}$, and hence $I^{\st} \supseteq I^{\st_w} = IR_M \cap (\bigcap_{N \in \mc N} IR_N)=IR_M \cap R$.

(7) By (4), Lemma~\ref{l:flat}, and (the proof of) Lemma~\ref{l:locsp}, $I^nR_M$ is $d$-super rigid for each $n$.  Using (6), we have $\bigcap_{n=1}^{\infty} (I^n)^{\st}=\bigcap_{n=1}^{\infty} (I^nR_M \cap R)=(\bigcap_{n=1}^{\infty}I^nR_M) \cap R$, which is prime by (5).

(8) Let $P$ be as described.  Since $IR_M \nsubseteq PR_M$, we have by (5) and (6) that $P \subseteq \bigcap_{n=1}^{\infty} I^nR_M \cap R =\bigcap_{n=1}^{\infty} (I^n)^{\st}$.
\end{proof}

We record the following useful consequence of Theorem~\ref{t:observations}.

\begin{corollary} \label{c:locval} If $M$ is a $t$-super potent ideal of height one in a domain $R$, then $R_M$ is a valuation domain.  In particular, a one-dimensional local $d$-super potent domain is a valuation domain.
\end{corollary}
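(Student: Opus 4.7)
The plan is to reduce the first claim to the ``in particular'' statement. Given a height-one $t$-super potent maximal $t$-ideal $M$ of $R$, I apply Lemma~\ref{l:flat}(2) with $T = R_M$: since $R_M$ is a flat overring of $R$ and $MR_M \ne R_M$, $MR_M$ is $t_{R_M}$-super potent in $R_M$. Lemma~\ref{l:locsp} upgrades this to $d$-super potency, and the height of $M$ is preserved under localization, so $R_M$ is a one-dimensional local $d$-super potent domain. Thus it suffices to prove that any such domain is a valuation domain.

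So let $(R, M)$ be local, one-dimensional, and $d$-super potent, with $d$-super rigid ideal $I = (c) \subseteq M$; by Remark~\ref{r:principal} we may take $I$ principal. Theorem~\ref{t:observations}(5) makes $\bigcap_{n \ge 1} (c^n)$ a prime ideal of $R$ contained in $M$, and one-dimensionality forces it to equal $(0)$ or $M$. The latter would give $c \in (c^2)$, hence $c$ a unit, contradicting $c \in M$. Thus $\bigcap_n (c^n) = (0)$, so every nonzero $x \in R$ has a well-defined $v(x) := \max\{n \ge 0 : x \in (c^n)\}$, and I may write $x = c^{v(x)} x'$ with $x' \notin (c)$.

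To show $R$ is a valuation domain I pick nonzero $a, b \in R$ with $v(a) \le v(b)$ and factor $a = c^{v(a)} a'$ and $b = c^{v(b)} b'$ as above. Since $(c)$ is comparable to every ideal of $R$ (Theorem~\ref{t:observations}(5)) and $a' \notin (c)$, we have $(c) \subseteq (a')$, so $a' \mid c$, and consequently $a' \mid c^{v(b)-v(a)}$ when $v(b) > v(a)$. In the case $v(a) < v(b)$, this yields $a \mid c^{v(b)}$, whence $a \mid c^{v(b)} b' = b$ and $(b) \subseteq (a)$. In the case $v(a) = v(b)$, the relations $(a) \subseteq (b)$ and $(a') \subseteq (b')$ are equivalent, and the key move is to use Theorem~\ref{t:observations}(1) to conclude that $(a')$ is itself $d$-super rigid (since $(a')^d = (a') \supseteq (c) = I$); Theorem~\ref{t:observations}(5) reapplied with $(a')$ in place of $I$ then shows $(a')$ is comparable to every ideal of $R$, in particular to $(b')$.

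The main obstacle is the case $v(a) = v(b)$: comparability of $(c)$ with $(a')$ and $(b')$ only tells us that both contain $(c)$, which does not by itself relate $(a')$ and $(b')$ to each other. The crucial observation is that the ``comparable-to-every-ideal'' property passes from $I$ to any finitely generated ideal containing $I$, via the super-rigidity-preservation in Theorem~\ref{t:observations}(1). The remaining work is routine unpacking of Lemma~\ref{l:flat}, Lemma~\ref{l:locsp}, and the consequences of super rigidity already collected in Theorem~\ref{t:observations}.
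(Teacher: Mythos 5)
Your proof is correct, and while the reduction to the local one-dimensional case matches the paper's (the paper invokes Theorem~\ref{t:spchar} directly, which packages the Lemma~\ref{l:flat}/Lemma~\ref{l:locsp} steps you unpack), your treatment of the ``in particular'' statement takes a genuinely different route. The paper argues in two lines: for a nonzero finitely generated proper ideal $J$ of the one-dimensional local domain $R$, the radical of $J$ is $M$, so $J \supseteq I^n$ for some $n$; since $I^n$ is $d$-super rigid by Theorem~\ref{t:observations}(4), $J$ is invertible, hence principal, and a local B\'ezout domain is a valuation domain. Your argument instead builds the comparability of principal ideals by hand: you show $\bigcap (c^n)=(0)$, introduce the $c$-adic order $v$, and settle the case $v(a)=v(b)$ by promoting $(a')$ to a $d$-super rigid ideal via Theorem~\ref{t:observations}(1) and then applying the comparability statement (5) to it. This is essentially the argument the authors use later to prove the stronger Theorem~\ref{t:val} (there $\bigcap(c^n)$ is the divided prime $P$ and $R/P$ is shown to be a valuation domain by comparing principal ideals generated by elements of $M\setminus P$), specialized to the case $P=(0)$; so your route is longer but foreshadows the structure theorem, whereas the paper's is shorter and avoids the order function entirely. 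One loose end: when $v(a)=v(b)$ you should dispose separately of the trivial case where $a'$ (or $b'$) is a unit, since Theorem~\ref{t:observations}(1) produces a $d$-super rigid ideal only when the ideal in question is proper (a $\st$-rigid ideal must lie in exactly one maximal $\st$-ideal, and $R$ lies in none); the case is immediate, since then $(a)=(c^{v(a)})\supseteq(b)$.
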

\begin{proof} We begin with the ``in particular'' statement. Let $R$ be a one-dimensional local $d$-super potent domain,  $I$ a $d$-super rigid ideal of $R$, and $J$ a finitely generated ideal of $R$. Then $J \supseteq I^n$ for some positive integer $n$.  Since $I^n$ is $d$-super rigid by Theorem~\ref{t:observations}, $J$ must be (invertible and hence) principal.  It follows that $R$ is a valuation domain.  Now assume that $M$ is $t$-super potent of height one in a domain $R$.  By Theorem~\ref{t:spchar}, $R_M$ is $d$-super potent and is therefore a valuation domain by what has just been proved.
\end{proof}

It is easy to see that the requirement on the height of $M$ in Corollary~\ref{c:locval} is necessary--take $R$ to be any local non-valuation domain having principal maximal ideal and dimension at least two.


\section{the local case} \label{s:local}

Let $(R,M)$ be a local domain and $\st$ a finite-type star operation on $R$.  Recall from Lemma~\ref{l:locsp} that $R$ is $\st$-super potent if and only if $R$ is $d$-super potent.  We shall characterize and study local $d$-super potency.

As in \cite{d} we say that a prime ideal $P$ of a domain $R$ is \emph{divided} if $P=PR_P$.  Domains in which each prime ideal is divided were introduced and  briefly studied in \cite{ak}, apparently motivated by considerations from \cite{go}.  
Recall that if $P$ is a prime ideal of a domain $R$, then $R+PR_P$ is called the \emph{CPI-extension of $R$ with respect to $P$} \cite{bs}. (``CPI'' is short for ``complete pre-image.'')  The next lemma follows easily from arguments in \cite{ak,d,bs}.

\begin{lemma} \label{l:divided} Let $P$ be a prime ideal of a domain $R$.  Then the following statements are equivalent. \begin{enumerate}
\item $P$ is divided.
\item $P$ is comparable to each principal ideal of $R$.
\item $P$ is comparable to each ideal of $R$.
\item $R$ is the CPI-extension of $R$ with respect to $P$.
\end{enumerate}
\end{lemma}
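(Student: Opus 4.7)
The plan is to establish the cycle $(1)\Rightarrow(3)\Rightarrow(2)\Rightarrow(1)$ together with $(1)\Leftrightarrow(4)$. Most of the real work is transferring information between "$p/s \in PR_P$" and comparability of principal ideals, using primality of $P$.

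For $(1)\Rightarrow(3)$ I would assume $P=PR_P$, take an arbitrary ideal $I\nsubseteq P$, pick some $x\in I\setminus P$, and show $P\subseteq I$ as follows: any $p\in P$ satisfies $p/x\in PR_P=P\subseteq R$, so $p=x(p/x)\in xR\subseteq I$. The implication $(3)\Rightarrow(2)$ is immediate.

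For $(2)\Rightarrow(1)$, which I view as the main step, the only thing to verify is $PR_P\subseteq P$. Given any $p/s \in PR_P$ with $p\in P$ and $s\in R\setminus P$, the principal ideal $(s)$ is not contained in $P$, so by comparability $P\subseteq (s)$. Thus $p=sr$ for some $r\in R$; this gives $p/s=r\in R$, and since $sr\in P$ with $s\notin P$, primality of $P$ forces $r\in P$. Hence $PR_P\subseteq P$, and the reverse containment is automatic.

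Finally, $(1)\Leftrightarrow(4)$: the CPI-extension $R+PR_P$ equals $R$ exactly when $PR_P\subseteq R$. If $P=PR_P$ this is obvious. Conversely, if $PR_P\subseteq R$, then for each $p/s\in PR_P$ (with $p\in P$, $s\notin P$) we have $p/s\in R$ and $s(p/s)=p\in P$, so primality of $P$ again delivers $p/s\in P$, giving $PR_P=P$. The only potential pitfall is being careful with the primality argument in $(2)\Rightarrow(1)$ and $(4)\Rightarrow(1)$, but no further structure beyond that is needed.
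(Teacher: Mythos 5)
Your proof is correct and complete. Note that the paper itself gives no argument for this lemma, remarking only that it ``follows easily from arguments in'' Akiba, Dobbs, and Boisen--Sheldon, so there is no in-paper proof to compare against; your cycle $(1)\Rightarrow(3)\Rightarrow(2)\Rightarrow(1)$ plus $(1)\Leftrightarrow(4)$ is a clean, self-contained verification of exactly the kind those references supply. The key steps all check out: every element of $PR_P$ can be written as $p/s$ with $p\in P$, $s\notin P$ (common denominators), so your use of $P\subseteq(s)$ in $(2)\Rightarrow(1)$ and the primality argument $s(p/s)=p\in P$, $s\notin P$ $\Rightarrow$ $p/s\in P$ in $(4)\Rightarrow(1)$ are both sound, as is the observation that the CPI-extension $R+PR_P$ equals $R$ precisely when $PR_P\subseteq R$.
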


\begin{theorem} \label{t:val} Let $(R,M)$ be a local domain, not a field.  Then $R$ is  $d$-super potent if and only if there is a divided prime $P \subsetneq M$ such that $R/P$ is a valuation domain.
\end{theorem}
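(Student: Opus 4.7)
The plan is to prove the two directions separately. The backward direction is essentially immediate once the right element is chosen, while the forward direction hinges on taking $P$ to be the intersection of powers of a super-rigid generator and verifying its properties.

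For $(\Leftarrow)$, suppose $P \subsetneq M$ is divided and $R/P$ is a valuation domain. I pick any $c \in M \setminus P$ and claim $(c)$ is $d$-super rigid. Rigidity is automatic since $R$ is local. For super-rigidity, let $J \supseteq (c)$ be finitely generated. Its image $\bar J$ in the valuation domain $R/P$ is finitely generated, hence principal, say $\bar J = (\bar d)$ for some $d \in R$. Now $c \notin P$ and $P$ is divided, so Lemma~\ref{l:divided} gives $P \subseteq (c) \subseteq J$; also $\bar d \neq 0$ (because $\bar c \in \bar J = (\bar d)$), so $d \notin P$ and similarly $P \subseteq (d)$. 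Hence $J = J + P = (d) + P = (d)$, so $J$ is principal and in particular $d$-invertible.

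For $(\Rightarrow)$, let $I \subseteq M$ be a $d$-super rigid ideal; by Remark~\ref{r:principal}, $I = (c)$. Set $P := \bigcap_{n \ge 1}(c^n)$, which is prime by Theorem~\ref{t:observations}(5). The strict containment $P \subsetneq M$ follows because $c \in M \setminus P$: if $c$ were in $P \subseteq (c^2)$, then $c = c^2 r$ for some $r$, and cancellation in the domain would force $c$ to be a unit. For $P$ to be divided, Lemma~\ref{l:divided} reduces matters to comparing $P$ with an arbitrary principal ideal $(a)$; if $(a) \nsubseteq P$, I choose $n$ with $a \notin (c^n)$, and since each $(c^n)$ is super rigid by Theorem~\ref{t:observations}(4) and hence comparable to $(a)$ by Theorem~\ref{t:observations}(5), I get $(c^n) \subseteq (a)$ and therefore $P \subseteq (a)$.

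The remaining and most delicate step is to show $R/P$ is a valuation domain. It suffices to check that for $a, b \in R$ with $a \notin P$, the ideals $(a)$ and $(b)$ are comparable in $R$ (the case where both lie in $P$ is trivial after passing to $R/P$, and a local domain with linearly ordered principal ideals is a valuation domain). Given such $a, b$, the finitely generated ideal $(a, b, c) \supseteq (c)$ is $d$-invertible and hence principal in the local ring, say $(a, b, c) = (d_1)$. Writing $a = d_1\alpha_1$, $b = d_1\beta_1$, $c = d_1\gamma_1$, the triple $(\alpha_1, \beta_1, \gamma_1)$ generates $R$, so locality forces one of them to be a unit. A unit $\alpha_1$ yields $(a) = (d_1) \supseteq (b)$, and a unit $\beta_1$ gives the symmetric conclusion; otherwise $\gamma_1$ is a unit, $(c) = (d_1)$, and $a, b \in (c)$, so I replace $a, b$ with $a/c, b/c$ and repeat. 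Because $a \notin \bigcap(c^n) = P$, this reduction can occur only finitely often, so comparability must eventually result. The main obstacle is organizing this iteration so that termination and the transfer of comparability back to the original pair $a, b$ are transparent.
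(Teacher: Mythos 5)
Your proof is correct, and its overall skeleton---taking $P=\bigcap_{n\ge 1}(c^n)$ for a $d$-super rigid $(c)$, getting primeness, the strict containment $P\subsetneq M$, and dividedness from Theorem~\ref{t:observations}(4,5) together with Lemma~\ref{l:divided}, and handling the converse by lifting principality from the valuation domain $R/P$ using $P\subseteq (c)\subseteq J$---coincides with the paper's. The one place you genuinely diverge is the step you flag as delicate, namely that $R/P$ is a valuation domain: you run an explicit Euclidean-style iteration, repeatedly writing $(a,b,c)=(d)$, extracting a unit among the cofactors, and dividing by $c$ until termination (guaranteed by $a\notin\bigcap(c^n)=P$). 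This is valid---comparability transfers back because cancellation of $c^n$ is harmless in a domain, and the third case can recur at most finitely often---but the paper short-circuits it: from your own dividedness argument you already have $(c^k)\subseteq(a)$ for every $a\in M\setminus P$, so Theorem~\ref{t:observations}(1) makes $(a)$ itself $d$-super rigid, hence comparable to \emph{every} ideal of $R$ by Theorem~\ref{t:observations}(5); comparability of any two principal ideals generated by elements of $M\setminus P$ is then immediate, with no iteration or termination bookkeeping. Your route is self-contained at that step but buys nothing further; if you keep it, you should at least record explicitly the two facts you call ``the main obstacle'': that $(a)\subseteq(b)$ iff $(a/c)\subseteq(b/c)$ when $c$ divides both, and that the reduction case can occur at most $k-1$ times once $k$ is chosen with $a\notin(c^k)$.
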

\begin{proof} Suppose that $R$ is $d$-super potent, and let $I \subseteq M$ be $d$-super rigid.  Then $I=(c)$ for some $c \in M$. Moreover, by Theorem~\ref{t:observations}(4,5), $P:= \bigcap (c^n)$ is prime, and, for each positive integer $m$, $(c^m)$ is $d$-super rigid and hence comparable to each ideal of $R$ .  Let $a \in M \setminus P$.  Then $a \notin (c^k)$ for some $k$, whence $P \subseteq (c^k) \subseteq (a)$.  Hence $(a)$ is $d$-super rigid.  This shows both that $P$ is divided (Lemma~\ref{l:divided}) and that any two principal ideals generated by elements  of $M \setminus P$ must be comparable (since each is a $d$-super rigid ideal).  It follows that $R/P$ is a valuation domain.

Now assume that $P$ is a divided prime properly contained in $M$ and that $R/P$ is a valuation domain.  Let $a \in M \setminus P$.  Since $P$ is divided, we have $P  \subsetneq (a)$ (Lemma~\ref{l:divided}). Suppose that $I=(a_1, \ldots, a_n)$ is a finitely generated ideal containing $(a)$.  Then $I/P \supseteq (a)/P$ in the valuation domain $R/P$, and it follows that ($I/P$ and hence) $I$ is principal. Therefore, $(a)$ is super rigid.  
\end{proof}

Recall from Corollary~\ref{c:locval} that a one-dimensional $d$-super potent domain is a valuation domain. Of course, this is also an immediate corollary of Theorem~\ref{t:val}, as is the following result in the two-dimensional case.

\begin{corollary} \label{c:dim2} If $R$ is a two-dimensional local $d$-super potent domain, then $R$ has exactly two nonzero prime ideals. \qed
\end{corollary}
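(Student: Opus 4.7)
My plan is to apply Theorem~\ref{t:val} to obtain a divided prime $P \subsetneq M$ such that $R/P$ is a valuation domain, and then split into two cases based on $\hgt P$. Since $\dim R = 2$ and $P \subsetneq M$, necessarily $\hgt P \in \{0,1\}$.

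If $P = (0)$, then $R = R/P$ is itself a two-dimensional valuation domain, which has a unique height-one prime $Q$, so the nonzero primes of $R$ are exactly $Q$ and $M$. If instead $\hgt P = 1$, then any chain of primes in $R/P$ of length at least $2$ would lift to a chain of length at least $3$ in $R$, so $\dim(R/P) \leq 1$; the chain $P \subsetneq M$ gives the reverse inequality, so $\dim(R/P) = 1$. A one-dimensional valuation domain has precisely two primes, so the primes of $R$ containing $P$ are $P$ and $M$. For a prime $Q$ not containing $P$, the fact that $P$ is divided together with Lemma~\ref{l:divided} forces $Q \subsetneq P$, and then $\hgt P = 1$ gives $Q = (0)$. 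Hence in this case the primes of $R$ are exactly $0, P, M$, again yielding two nonzero primes.

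I do not anticipate any real obstacle: the structural content is carried entirely by Theorem~\ref{t:val} and the comparability property of divided primes recorded in Lemma~\ref{l:divided}, and what remains is essentially dimension bookkeeping. The only step that deserves a moment of care is the verification that $\dim(R/P) = 1$ when $\hgt P = 1$, but this is immediate from $\dim R = 2$ as indicated above. Accordingly I would give the proof as a short paragraph following the case split, without further elaboration.
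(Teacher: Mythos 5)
Your proof is correct and follows exactly the route the paper intends: the paper states this as an immediate consequence of Theorem~\ref{t:val} (with no written argument), and your case split on $\hgt P$ together with the comparability of the divided prime from Lemma~\ref{l:divided} is just the natural fleshing-out of that.
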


It is trivial that a Noetherian domain $R$ is $\st$-potent for any star operation $\st$ on $R$. As another consequence of Theorem~\ref{t:val}, we have a characterization of Noetherian $t$-super potent domains:

\begin{corollary} \label{c:noe} Let $R$ be a Noetherian domain.  \begin{enumerate} 
\item If $M$ is a $t$-super potent maximal $t$-ideal of $R$, then $\text{\hgt}(M)=1$.
\item If $R$ is $t$-super potent, then $R$ is a Krull domain.
\end{enumerate}
\end{corollary}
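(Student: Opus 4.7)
For part (1), my plan is to localize at $M$ and then apply the local characterization in Theorem~\ref{t:val}. The hypothesis that $M$ is $t$-super potent, combined with Theorem~\ref{t:spchar}(1), gives that $R_M$ is $d$-super potent; of course $R_M$ is also Noetherian. Assuming $R$ (hence $R_M$) is not a field, Theorem~\ref{t:val} then supplies a divided prime $P \subsetneq MR_M$ of $R_M$ such that $R_M/P$ is a valuation domain. Noetherianness forces $R_M/P$ to be either a field or a DVR; the field case is ruled out by $P \subsetneq MR_M$, so $R_M/P$ is one-dimensional, and it remains to show $P = 0$.

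The key step is to apply Nakayama's lemma to the finitely generated ideal $P$ of the Noetherian local ring $R_M$. Choose any $s \in MR_M \setminus P$; then the divided condition $P = PR_P$ gives $p/s \in P$ for every $p \in P$, hence $p = s(p/s) \in sP$. Therefore $P = sP \subseteq (MR_M) \cdot P$, and Nakayama yields $P = 0$. Consequently $R_M$ is itself a DVR and $\hgt(M) = 1$.

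For part (2), part (1) already tells us that every maximal $t$-ideal $M$ of $R$ has height one and that $R_M$ is a DVR. I would then verify the definition of Krull domain directly. The equality $R = \bigcap_{M \in t\text{-Max}(R)} R_M$ is a standard property of finite-type star operations, recorded in the introduction. For local finiteness, Krull's principal ideal theorem in the Noetherian domain $R$ guarantees that any nonzero element lies in only finitely many height-one primes, hence in only finitely many maximal $t$-ideals. Thus $R$ is a locally finite intersection of DVR overrings of $K$, which is exactly a Krull domain. The one genuinely substantive step is the Nakayama argument in part (1)---the divided prime $P$ produced by Theorem~\ref{t:val} is a priori unrestricted in a local $d$-super potent domain, but the Noetherian hypothesis collapses it to zero; once that is established, part (2) is a matter of assembling standard Krull-domain facts.
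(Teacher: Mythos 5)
Your proof is correct, and it follows the same skeleton as the paper's: localize at $M$, invoke Theorem~\ref{t:spchar} and Theorem~\ref{t:val} to produce a divided prime $P \subsetneq MR_M$ with $R_M/P$ a valuation domain, show $P=(0)$, and conclude that $R_M$ is a DVR. The one genuine divergence is in how you kill $P$: the paper picks $a \in M \setminus P$, uses dividedness to get $P \subsetneq (a) \subseteq Q$ for a prime $Q$ minimal over $(a)$, and applies the principal ideal theorem to force $\hgt(Q)=1$ and hence $P=(0)$; you instead observe that $P=PR_P$ gives $P=sP$ for any $s \in MR_M \setminus P$ and apply Nakayama to the finitely generated ideal $P$. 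Both are sound; your Nakayama argument is arguably more elementary (it avoids Krull's Hauptidealsatz entirely) and isolates exactly where Noetherianness enters, while the paper's version fits the ``comparable to every principal ideal'' phrasing of Lemma~\ref{l:divided} more directly. In part (2) you also diverge slightly: the paper deduces that $R$ is (completely) integrally closed from $R=\bigcap R_M$ and then cites the standard fact that an integrally closed Noetherian domain is Krull, whereas you verify the locally-finite-intersection-of-DVRs definition directly, getting local finiteness from the finiteness of the set of minimal primes of a principal ideal in a Noetherian ring. Your route is self-contained modulo standard Noetherian facts; the paper's is shorter given the Mori--Nagata-type characterization it assumes.
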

\begin{proof} (1) Let $M$ be a $t$-super potent maximal $t$-ideal of $R$.  Then $R_M$ is a $d$-super potent Noetherian domain, and hence we may as well assume that $R$ is local with $d$-super potent maximal ideal $M$. By Theorem~\ref{t:val}, there is a divided prime $P \subsetneq M$ such that $R/P$ is a Noetherian valuation domain.  Moreover, if we choose $a \in M \setminus P$ and shrink $M$ to a prime $Q$ minimal over $a$, then $Q \supseteq (a) \supsetneq P$.  By the principal ideal theorem, we must have $\hgt(Q)=1$, and hence $P=(0)$.  But then $R$ is a Noetherian valuation domain, and we must have $\hgt(M)=1$.  For (2), suppose that $R$ is $t$-super potent.  By (1) $R_M$ is a Noetherian valuation domain for each $M \in t$-\text{Max}($R$), and hence the representation $R=\bigcap \{R_M \mid M \in \text{$t$-Max}(R)\}$ shows that $R$ is (completely) integrally closed and therefore a Krull domain.
\end{proof}

\begin{remark} \label{r:gmz} (1) Recall from \cite{gmz} that a nonzero element $a$ of a domain $R$ is said to be \emph{comparable} if $(a)$ compares to each ideal of $R$ under inclusion.  By \cite[Theorem 2.3]{gmz} and Theorem~\ref{t:val}, non-field local $d$-super potent domains coincide with domains that admit nonzero, nonunit comparable elements.  Moreover, again by \cite[Theorem 2.3]{gmz}, for such a domain $R$, the ideal $P_0:= \bigcap \{(c) \mid c \text{ is a nonzero comparable element of } R\}$ is a divided prime and is such that $R/P_0$ is a valuation domain, and $P_0$ is the (unique) smallest prime $L$ of $R$ such that $L$ is divided and $R/L$ is a valuation domain.

(2) With the notation above, the following statements are equivalent: (a) $R$ is a valuation domain, (b) $R_{P_0}$ is a valuation domain, and (c) $P_0 = (0)$: the implications (a) $\ra$ (b) and (c) $\ra$ (a) are clear, and (b) $\ra$ (c)  by the remark following Theorem 2.3 of \cite{gmz}.

(3) As explained in the just-mentioned remark in \cite{gmz}, every local domain $(R,M)$ that admits a nonzero, nonunit comparable element arises as a pullback 

$$  \begin{CD}
        R   @>>>    V\\
        @VVV        @VVV    \\
        T  @>\varphi>>   T/M = k,\vspace{.08in}
\end{CD}$$ 
where $(T,M)$ is a local domain and $V$ is a valuation domain with quotient field $k$ (in which case we have $T=R_M$). In particular, if $T$ is a two-dimensional Noetherian domain, it must have infinitely many height-one primes and hence so must $R$.  Thus Corollary~\ref{c:dim2} does not extend to higher dimensions; indeed, the primes of a local $d$-super potent domain need not even be linearly ordered (e.g. $R=\mathbb Z_{(p)}+(x,y)\mathbb Q[[x,y]]$, where $p$ is prime and $x, y$ are indeterminates).
\end{remark}

We end this section with an attempt to globalize local $\st$-super potency.


\begin{lemma} \label{l:zero} Let $M,N,P$ be primes in a domain $R$ with $P \subseteq M \cap N$, and assume that $PR_M$ is divided in $R_M$ and that $R_M/PR_M$ and $R_N$  are valuation domains.  Then $R_M$ is a valuation domain.
\end{lemma}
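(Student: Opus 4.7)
The plan is to show directly that for every $x$ in the common quotient field $K$, either $x \in R_M$ or $x^{-1} \in R_M$, by gluing together the valuation structures on $R_P$ and on $R_M/PR_M$ via the divided hypothesis.

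First I would collect three preliminaries. (i) Since $P \subseteq N$, we have $R_N \subseteq R_P$ and in fact $R_P$ is a localization of $R_N$; hence $R_P$ is a valuation domain. (ii) The divided hypothesis on $PR_M$ gives $PR_M = PR_M \cdot (R_M)_{PR_M} = P \cdot R_P = PR_P$, so the prime $PR_M$ of $R_M$ coincides with the maximal ideal of $R_P$. (iii) The quotient field of $R_M/PR_M$ is $R_P/PR_P$: inverting the nonzero elements of $R_M/PR_M$, which come from $R_M \setminus PR_M \subseteq R \setminus P$, recovers $R_P/PR_P$, and this is a field because $PR_P$ is the maximal ideal of $R_P$.

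Now given $x \in K$, since $R_P$ is a valuation domain I may, after replacing $x$ by $x^{-1}$ if necessary, assume $x \in R_P$. Consider the image $\bar x \in R_P/PR_P$. Since $R_M/PR_M$ is a valuation domain with quotient field $R_P/PR_P$, either $\bar x \in R_M/PR_M$ or $\bar x^{-1} \in R_M/PR_M$. In the first case, pick $y \in R_M$ with $\bar y = \bar x$; then $x - y \in PR_P = PR_M \subseteq R_M$, so $x \in R_M$. In the second case, $\bar x$ is a unit in the field $R_P/PR_P$, hence $x \notin PR_P$, so $x$ is a unit of $R_P$ with $x^{-1} \in R_P$; applying the first case to $x^{-1}$ (whose image is $\bar x^{-1} \in R_M/PR_M$) gives $x^{-1} \in R_M$. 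Either way, $x$ or $x^{-1}$ lies in $R_M$, so $R_M$ is a valuation domain. The only conceptually delicate step is verifying that the divided condition forces $PR_M = PR_P$ and that the fraction field of $R_M/PR_M$ is exactly $R_P/PR_P$; once those are in place, the argument reduces to the standard $D+M$-style gluing of two valuation domains along a common residue field.
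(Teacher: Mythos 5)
Your proof is correct and follows essentially the same route as the paper: both arguments observe that $R_P=(R_N)_{PR_N}$ is a valuation domain, use the divided hypothesis to identify $R_M$ with the preimage of $R_M/PR_M$ under the projection $R_P \to R_P/PR_P$, and conclude from the resulting composite (pullback) structure. The only difference is that the paper finishes by citing Gilmer's Proposition 18.2(3) for this pullback of valuation domains, whereas you verify that final step directly with the element-by-element ``$x$ or $x^{-1}$'' argument, which is a perfectly sound unwinding of the same fact.
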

\begin{proof}  Since $R_N$ is a valuation domain, so is $R_P=(R_N)_{PR_N}$.  However, we also have $R_P=(R_M)_{PR_M}$, and since $R_M =\varphi^{-1}(R_M/PR_M)$, where $\varphi: R_P \to R_P/PR_P$ is the canonical projection, $R_M$ is a valuation domain by \cite[Proposition 18.2(3)]{g2}.
\end{proof}

\begin{definition} \label{d:belong} Let $R$ be a domain, and let $P \subsetneq M$ be prime ideals of $R$. We say that  $P$  \emph{belongs} to $M$ if $PR_M=PR_P$ and $R_M/PR_M$ is a valuation domain.
\end{definition}

Note that, by Theorem~\ref{t:val}, a prime ideal $M$ of a domain $R$ contains a belonging prime if and only if $R_M$ is $d$-super potent.  Moreover, if $M$ contains a belonging prime, then it contains a smallest one by Remark~\ref{r:gmz}(1).

\begin{lemma} \label{l:commonmin} Let $R$ be a domain, let $M, N$ be prime ideals of $R$, 
and suppose that there is a prime belonging to both $M$ and $N$.  Then the smallest prime of $R$ that belongs to $N$ also belongs to $M$ {\rm (}and vice versa{\rm )}.
\end{lemma}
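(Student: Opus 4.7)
The plan is to fix a prime $Q$ belonging to both $M$ and $N$, let $P$ denote the smallest prime of $R$ belonging to $N$, and verify directly that $P$ belongs to $M$; the converse assertion will follow by the same argument with the roles of $M$ and $N$ reversed. First I would use Remark~\ref{r:gmz}(1) applied inside $R_N$ to see that the belonging prime $QR_N$ must contain $PR_N$, hence $P \subseteq Q$.

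The next step is a bookkeeping identity: from the inclusion chain $R_N \subseteq R_Q \subseteq R_P$ (coming from $P \subseteq Q \subseteq N$) together with the divided equality $PR_N = PR_P$ supplied by $P$ belonging to $N$, I would deduce $PR_Q = PR_N = PR_P$. Now I would invoke the CPI/pullback description of $R_M$ from Lemma~\ref{l:divided}: because $Q$ is divided in $R_M$ and $R_M/QR_M$ is a valuation domain, one has $R_M = \{x \in R_Q : x + QR_Q \in R_M/QR_M\}$ (using $(R_M)_{QR_M}=R_Q$). Since $P \subseteq Q$, every element of $PR_Q$ maps to $0$ modulo $QR_Q$, so $PR_Q \subseteq R_M$; combined with the prime-correspondence identity $PR_Q \cap R_M = PR_M$ arising from the localization $R_M \to R_Q$, this yields $PR_M = PR_Q = PR_P$, which is precisely the divided condition for $P$ with respect to $M$.

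To handle the valuation-quotient condition, I would pass to $\bar R := R/P$ with primes $\bar M, \bar N, \bar Q$ induced by $M, N, Q$. The hypothesis that $P$ belongs to $N$ makes $\bar R_{\bar N} = R_N/PR_N$ a valuation domain, and since $\bar Q \subseteq \bar N$ the further localization $\bar R_{\bar Q} = R_Q/PR_Q$ is also a valuation domain. Translating $QR_M = QR_Q$ and the newly established $PR_M = PR_Q$ into $\bar R$ yields $\bar Q \bar R_{\bar M} = \bar Q \bar R_{\bar Q}$, so $\bar Q \bar R_{\bar M}$ is divided in $\bar R_{\bar M}$; meanwhile $\bar R_{\bar M}/\bar Q \bar R_{\bar M} \cong R_M/QR_M$ is a valuation domain because $Q$ belongs to $M$. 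I would then conclude via \cite[Proposition 18.2(3)]{g2} (exactly as in the proof of Lemma~\ref{l:zero}) that $\bar R_{\bar M} = R_M/PR_M$, being realized as a pullback of two valuation domains, is itself a valuation domain.

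The main obstacle I expect is verifying the containment $PR_Q \subseteq R_M$, since the divided condition for $P$ lives a priori only inside $R_N$; the crucial leverage is the pullback description of $R_M$ supplied by $Q$ being a divided prime in $R_M$, which is what allows $P$-information to be transported across the distinct localizations $R_N$ and $R_M$. Once that containment is in hand, the remaining manipulations are routine tracking of localizations, contractions, and residue quotients.
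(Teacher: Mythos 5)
Your proof is correct and follows essentially the same route as the paper's: both establish the divided condition via the chain $PR_P=PR_N\subseteq PR_Q\subseteq QR_Q=QR_M\subseteq R_M$ followed by contraction to $R_M$, and both obtain the valuation quotient by working in $R/P$ with the pullback argument of Lemma~\ref{l:zero}. The only cosmetic difference is that you inline the proof of Lemma~\ref{l:zero} (via \cite[Proposition 18.2(3)]{g2}) and phrase the containment $PR_Q\subseteq R_M$ through the CPI description, whereas the paper cites Lemma~\ref{l:zero} directly and writes the inclusion chain in one line.
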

\begin{proof} Let $P$ belong to both $M$ and $N$, and let $Q$ be the smallest prime belonging to $N$.  We have $Q \subseteq P$. Applying Lemma~\ref{l:zero} to $R/Q$ yields that $R_M/QR_M=(R/Q)_{M/Q}$ is a valuation domain.  Also, since $QR_N$ is divided in $R_N$, $$QR_Q=QR_N \subseteq QR_P \subseteq PR_P=PR_M \subseteq R_M.$$  Hence $QR_Q =QR_Q \cap R_M=QR_M$.  Therefore, $Q$ belongs to $M$.
\end{proof}

\begin{remark} \label{r:equiv} Let $\st$ be a finite-type star operation on a domain $R$, and assume that $R$ is $\st$-super potent.  Then each maximal $\st$-ideal of $R$ contains a belonging prime by Theorems~\ref{t:spchar} and \ref{t:val}.  Define $\sim$ on $\st$-{\rm Max($R$)} by $M \sim N$ if $M$ and $N$ contain a common belonging prime.  It is perhaps interesting that $\sim$ is an equivalence relation: it is clearly reflexive and symmetric, and transitivity follows easily from Lemma~\ref{l:commonmin}.
\end{remark}

Observe that the relation described above forces a certain amount of ``independence'' in $\st$-\text{Max}($R$): if $M, N$ are two maximal $\st$-ideals in the $\st$-super potent domain $R$ with $M \not \sim N$, $P$ belongs to $M$, $Q$ belongs to $N$, and $Q \subseteq P$, then $P \nsubseteq N$. We give a simple example illustrating this. \bs

\begin{example} \label{e:funny} Let $F$ be a field, and $x,y$ indeterminates.  Set $V=F(x)[y]_{yF(x)[y]}$, $T=F(y)[x^2,x^3]_{(x^2,x^3)F(y)[x^2,x^3]}$, $R_1=V+P$, where $P$ is the maximal ideal of $T$, $R_2=F(x)[y]_{(y+1)F(x)[y]}$, and $R=R_1 \cap R_2$.  Then $R_1$ and $R_2$ are $d$-super potent (both have principal maximal ideals).  Denote the maximal ideal of $R_1$ by $M_1$. Then $M:=M_1 \cap R$ and $N:=(y+1)R_2 \cap R$ are the maximal ideals of $R$, and by \cite[Theorem 3]{p}, we have $R_M=R_1$ and $R_N=R_2$.   The domain $R$ is therefore $d$-super potent by Theorem~\ref{t:spchar}, and it is clear that $(0)$ belongs to $N$ and that $P$ (but not $(0)$) belongs to $M$.
\end{example}


\section{Polynomial rings over $t$-super potent domains} \label{s:poly}

We begin with some well-known facts about $t$-ideals in polynomial rings. Recall that if $R$ is a domain and $Q$ is  a nonzero prime of $R[X]$ for which $Q \cap R = (0)$, then $Q$ is called an \emph{upper to zero}.

\begin{lemma} \label{l:polyfacts} Let $R$ be a domain. \begin{enumerate}
\item An ideal $A$ of $R$ is a $t$-ideal if and only if $A[X]$ is a $t$-ideal of $R[X]$.
\item If $Q$ is maximal $t$-ideal of $R[X]$, then $Q=P[X]$ for some maximal $t$-ideal of $R$ or $Q$ is an upper to zero in $R[X]$.
\item An ideal $M$ of $R$ is a maximal $t$-ideal if and only if $M[X]$ is a maximal $t$-ideal of $R[X]$.
\item If $Q$ is an upper to zero in $R[X]$ and is also a maximal $t$-ideal, then $Q$ is $t$-super potent.
\end{enumerate}
\end{lemma}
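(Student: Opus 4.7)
For parts (1)--(3), my plan is to rely on the standard content-formula machinery and the well-known classification of $t$-ideals in polynomial rings. For (1), the key identity is $(A[X])^{-1} = A^{-1}[X]$ for any $A \in \mc F(R)$ (a rational function that multiplies $A[X]$ into $R[X]$ must lie in $K[X]$ with each coefficient in $A^{-1}$); inverting once more gives $(A[X])^v = A^v[X]$, and passing to finite type yields $(A[X])^t = A^t[X]$, since every finitely generated subideal of $A[X]$ lies inside $B[X]$ for some finitely generated $B \subseteq A$. The biconditional in (1) is then immediate. For (2), one first observes via (1) that $P := Q \cap R$ is a $t$-ideal of $R$ whenever $Q$ is a $t$-ideal of $R[X]$, and then appeals to the standard structure theorem for maximal $t$-ideals of $R[X]$ to conclude that either $Q$ is an upper to zero or $Q = P[X]$ with $P$ a maximal $t$-ideal of $R$. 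Part (3) drops out of (1) and (2).

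The substantive content is (4). The plan is to invoke the classical characterization of maximal $t$-ideals among uppers to zero in $R[X]$: such a $Q$ is automatically $t$-invertible. By $t$-invertibility, $Q = B^t$ for some finitely generated subideal $B \subseteq Q$, and this $B$ is my candidate for a $t$-super rigid ideal witnessing $t$-super potency of $Q$.

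To verify $t$-rigidity, any maximal $t$-ideal $M$ with $M \supseteq B$ satisfies $M = M^t \supseteq B^t = Q$, forcing $M = Q$ by the $t$-maximality of $Q$. For $t$-super rigidity, let $J \supseteq B$ be a finitely generated ideal. Then $J^t \supseteq B^t = Q$, so by $t$-maximality either $J^t = R[X]$ (in which case $J$ is trivially $t$-invertible) or $J^t = Q$. In the latter case, using $J^{-1} = (J^t)^{-1} = Q^{-1}$ together with the identity $(AB)^t = (A^t B)^t$, I would compute
\[
(JJ^{-1})^t = (JQ^{-1})^t = (J^t Q^{-1})^t = (QQ^{-1})^t = R[X],
\]
the last equality being the $t$-invertibility of $Q$. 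Hence $J$ is $t$-invertible, so $B$ is $t$-super rigid and $Q$ is $t$-super potent.

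The main obstacle is the entry point for (4): the fact that a maximal $t$-ideal upper to zero must be $t$-invertible is a nontrivial classical result that must either be cited or proved separately. Once that input is available, the rest of (4) is a short manipulation with $t$-closures, and (1)--(3) amount to bookkeeping with the content formula and the classical structure of $t$-primes in $R[X]$.
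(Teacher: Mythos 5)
Your proposal is correct. For parts (1)--(3) you follow essentially the same route as the paper: the identity $(A[X])^{-1}=A^{-1}[X]$ (hence $(A[X])^t=A^t[X]$, which is \cite[Proposition 4.3]{hh}) together with the structure theorem for primes of $R[X]$ that are maximal $t$-ideals (\cite[Proposition 1.1]{hz}). For part (4), however, you take a genuinely different path. The paper works with the explicit generators: writing $Q=fK[X]\cap R[X]$ and invoking \cite[Theorem 1.4]{hz} to produce $g\in Q$ with $c(g)^v=R$, it shows $(f,g)$ is $t$-rigid and then upgrades $t$-potency to $t$-super potency via the local characterization (Theorem~\ref{t:spchar}), using that $R[X]_Q=K[X]_{fK[X]}$ is a valuation domain. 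You instead use the other facet of the same Houston--Zafrullah theorem --- that a maximal $t$-ideal upper to zero is $t$-invertible --- take a finitely generated $B$ with $B^t=Q$, and show directly that any finitely generated $J\supseteq B$ has $J^t\in\{Q,R[X]\}$ and is $t$-invertible via $(JJ^{-1})^t=(J^tQ^{-1})^t=(QQ^{-1})^t=R[X]$. Both arguments rest on the same external input; yours stays entirely inside the star-operation calculus and avoids Theorem~\ref{t:spchar} and the localization at $Q$, while the paper's is shorter given that its local machinery is already in place. In fact your computation proves a slightly more general observation worth noting: any $t$-invertible maximal $t$-ideal is $t$-super potent. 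One small point to make explicit if you write this up: the step ``$Q$ $t$-invertible $\Rightarrow$ $Q=B^t$ for finitely generated $B\subseteq Q$'' is the standard fact that $t$-invertible $t$-ideals have finite type, which deserves a citation or a one-line proof.
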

\begin{proof} For (1) see \cite[Proposition 4.3]{hh}.  Let $Q$ be a maximal $t$-ideal of $R[X]$.  By \cite[Proposition 1.1]{hz}, $Q=(Q \cap R)[X]$ or $Q$ is an upper to zero.  It then follows from (1), that if $Q=(Q \cap R)[X]$, then $P:=Q \cap R$ must be a maximal $t$-ideal of $R$. This gives (2), and (3) follows from (1) and (2). Now suppose that $Q$ is an upper to zero and also a maximal $t$-ideal in $R[X]$.  Then $Q=fK[X] \cap R[X]$ for some polynomial $f \in Q$ such that $f$ is irreducible in $K[X]$.  By \cite[Theorem 1.4]{hz} there is an element $g  \in Q$ such that $c(g)^v=R$ (where $c(g)$, the content of $g$, is the ideal of $R$ generated by the coefficients of $g$), and it is easy to see via (1) and (2) that the ideal $(f,g)$ of $R[X]$ is contained in no maximal $t$-ideal of $R[X]$ other than $Q$.  Hence $Q$ is $t$-potent and therefore by Theorem~\ref{t:spchar} also $t$-super potent since $R[X]_Q$ is a valuation domain.  Hence (4) holds.
\end{proof}

\begin{theorem} \label{t:poly} Let $R$ be a domain.  Then $R$ is $t$-(super) potent if and only if $R[X]$ is $t$-(super) potent.
\end{theorem}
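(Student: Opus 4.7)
The overall plan is to move $t$-rigid and $t$-super rigid ideals back and forth between $R$ and $R[X]$, leaning on Lemma~\ref{l:polyfacts} to classify maximal $t$-ideals of $R[X]$ and on Theorem~\ref{t:spchar} to reduce the super-potent forward direction to a local statement about $R[X]_{P[X]}$.

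For the reverse direction, given a maximal $t$-ideal $P$ of $R$, Lemma~\ref{l:polyfacts}(3) ensures $P[X]$ is a maximal $t$-ideal of $R[X]$; I would pick a $t$-rigid (resp.\ $t$-super rigid) ideal $I \subseteq P[X]$ and let $A := c(I)$ be its content, a finitely generated subideal of $P$ with $I \subseteq A[X]$. Any maximal $t$-ideal $P'$ of $R$ containing $A$ lifts to $P'[X] \supseteq I$, a maximal $t$-ideal of $R[X]$ by Lemma~\ref{l:polyfacts}(3), whence $P' = P$ by rigidity of $I$; thus $A$ is $t$-rigid. For super rigidity, take a finitely generated $B \subseteq R$ with $B \supseteq A$: then $B[X] \supseteq I$ is $t$-invertible in $R[X]$, and combining the easy identity $(B[X])^{-1} = B^{-1}[X]$ with the observation that any maximal $t$-ideal $M$ of $R$ with $BB^{-1} \subseteq M$ would yield $BB^{-1}[X] \subseteq M[X]$ (a maximal $t$-ideal by Lemma~\ref{l:polyfacts}(3)), we conclude $(BB^{-1})^t = R$, so $B$ is $t$-invertible in $R$.

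For the forward direction, let $Q$ be a maximal $t$-ideal of $R[X]$. Lemma~\ref{l:polyfacts}(2,4) disposes of the case where $Q$ is an upper to zero. Otherwise $Q = P[X]$ for some maximal $t$-ideal $P$ of $R$, and I pick a $t$-rigid (resp.\ $t$-super rigid) ideal $I \subseteq P$. Rigidity of $I[X]$ is immediate: any $Q' \supseteq I[X]$ must satisfy $Q' \cap R \supseteq I \ne 0$, so $Q' = P'[X]$ and $I \subseteq P'$ forces $P' = P$. For super rigidity, rather than proving directly that $I[X]$ is $t$-super rigid, I would apply Theorem~\ref{t:spchar} to $R[X]$: it suffices to verify $Q$ is $t$-potent (just shown) and $R[X]_Q = R_P[X]_{PR_P[X]}$ is $d$-super potent. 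Since $R_P$ is local $d$-super potent by Theorem~\ref{t:spchar} applied to $R$, and $IR_P = cR_P$ for some $c \in I$ by Remark~\ref{r:principal}, this reduces to a local content analysis.

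The main obstacle is this last step: showing that $cR[X]_{P[X]}$ is $d$-super rigid in $R[X]_{P[X]}$, i.e.\ that every finitely generated ideal $J \supseteq cR[X]_{P[X]}$ is principal. The plan is a content argument carried out in $R_P[X]$. Write $J = (c, f_1, \dots, f_n)R_P[X]$; then the content $c(J)$ is a finitely generated ideal of $R_P$ containing $(c)$, hence principal by $d$-super rigidity of $(c)$ in the local ring $R_P$, say $c(J) = (b)$. Factoring $b$ out gives $J = bJ'$ with $c(J') = R_P$, and since $R_P$ is local, $J'$ contains either a unit of $R_P$ or a polynomial with a unit coefficient; in either case $J'$ meets $R_P[X] \setminus PR_P[X]$, so $J'R[X]_{P[X]} = R[X]_{P[X]}$ and $JR[X]_{P[X]} = bR[X]_{P[X]}$ is principal. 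The $t$-potent half of the theorem falls out of the same framework, using only the rigidity portion of each argument.
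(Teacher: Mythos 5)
Your proposal is correct, and its skeleton coincides with the paper's: the reverse direction via contents of generating sets and the identity $(B[X])^{-1}=B^{-1}[X]$, the forward potent direction via $I\mapsto I[X]$ and Lemma~\ref{l:polyfacts}(4) for uppers to zero, and the forward super potent direction by reducing through Theorem~\ref{t:spchar} to showing that $R[X]_{P[X]}$ is $d$-super potent when $R_P$ is. Where you genuinely diverge is in the proof of that local statement. The paper invokes the structural characterization of local $d$-super potent domains (Theorem~\ref{t:val}): it produces the divided prime $L\subsetneq P$ with $R/L$ a valuation domain and then must verify that $LR[X]_{L[X]}=LR[X]_{P[X]}$, a step the authors single out in Remark~\ref{r:proof} as requiring \emph{both} hypotheses on $L$ and illustrate with a counterexample when the valuation hypothesis fails. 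You instead argue directly that the principal $d$-super rigid ideal $cR_P$ stays $d$-super rigid in $R_P[X]_{PR_P[X]}$: given a finitely generated $J\supseteq (c)$ with generators in $R_P[X]$, the ideal generated by $c$ and the coefficients of the generators contains $(c)$, hence is principal, say $(b)$; factoring out $b$ leaves a generating set with a unit coefficient, hence an ideal not contained in $PR_P[X]$, so $J=bR[X]_{P[X]}$. This content computation is more elementary and self-contained --- it never needs Theorem~\ref{t:val} or the divided-prime bookkeeping --- at the cost of not exhibiting the divided prime $LR[X]_{P[X]}$ that the paper's route makes visible. Both arguments are sound; yours is arguably the cleaner proof of the key lemma.
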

\begin{proof} Suppose that $R$ is $t$-potent, and let $Q$ be a maximal $t$-ideal of $R[X]$.  By Lemma~\ref{l:polyfacts}(2), $Q$ is either an upper to zero or $Q=P[X]$ with $P$ a maximal $t$-ideal of $R$.  If $Q$ is an upper to zero, it is $t$-super potent by Lemma~\ref{l:polyfacts}(4).  If $Q=P[X]$ with $P \in \tmx(R)$, then there is a $t$-rigid ideal $I$ of $R$ contained in $P$, and it is easy to see that $I[X]$ is $t$-rigid in $R[X]$.  Hence $R[X]$ is $t$-potent. 

Now assume that $R$ is $t$-super potent.  Then $R[X]$ is $t$-potent by what has already been proved.  Hence, by Theorem~\ref{t:spchar}, it suffices to show that $R[X]_Q$ is $d$-super potent for each maximal $t$-ideal $Q$ of $R[X]$.  To this end, let $Q$ be a maximal $t$-ideal of $R[X]$. Again by Lemma~\ref{l:polyfacts}(4), we may as well assume that $Q=P[X]$ with $P$ a maximal $t$-ideal of $R$.  We shall show that $R[X]_Q$ satisfies the requirements of Theorem~\ref{t:val}.  Since $R[X]_Q=R_P[X]_{PR_P[X]}$ and $R_P$ is $d$-super potent, we change notation and assume that $R$ is local with $d$-super potent maximal ideal $P$, and we wish to show that $R[X]_{P[X]}$ is $d$-super potent. By Theorem~\ref{t:val} there is a prime $L$ of $R$ such that $L \subsetneq P$, $R/L$ is a valuation domain, and $L=LR_L$.  Then $R[X]_{P[X]}/LR[X]_{P[X]} =(R/L)[X]_{(P/L)[X]}$, which is a  valuation domain.  Finally, we must show that $LR[X]_{L[X]}=LR[X]_{P[X]}$.  Let $f,g \in R[X]$ with $c(g) \subseteq L$ and $f \in R[X] \setminus L[X]$.  If $f \notin P[X]$, then $g/f \in LR[X]_{P[X]}$, as desired.  Suppose that $f \in P[X]$.  Since $L=LR_P$ and $f \notin L[X]$, $c(f) \supsetneq c(g)$, and, since $R/L$ is a valuation domain, $c(f)=(b)$ for some $b \in P \setminus L$.  Note that $b^{-1}f \in R[X] \setminus P[X]$.  Also, since $b^{-1}g\cdot b \in L[X]$ and $b \notin L$, $b^{-1}g \in L[X]$.  Thus $g/f=b^{-1}g/(b^{-1}f) \in LR[X]_{P[X]}$, as desired.

For the converse, first assume that $R[X]$ is $t$-potent, and let $P$ be a maximal $t$-ideal of $R$.  Then $P[X]$ is a maximal $t$-ideal of $R[X]$, and we may find a $t$-rigid ideal $A \subseteq P[X]$.  Let $I$ denote the ideal of $R$ generated by the coefficients of the polynomials in a finite generating set of $A$.  Then $I$ is a finitely generated ideal of $R$ contained in $P$, and since $A \subseteq I[X] \subseteq P[X]$ yields that $I[X]$ is $t$-rigid in $R[X]$, it is clear that $I$ is $t$-rigid in $R$.  Hence $R$ is $t$-potent.  Finally, suppose that $R[X]$ is $t$-super potent.  Using the notation above, we may assume that $A$ is $t$-super rigid, whence $I[X]$ is also $t$-super rigid.  If $J$ is a finitely generated ideal of $R$ containing $I$, then $J[X]$ is a finitely generated ideal of $R[X]$ containing $I[X]$; this yields that $J[X]$ is $t$-invertible in $R[X]$, from which it follows easily that $J$ is $t$-invertible in $R$.  Hence $t$-super potency of $R[X]$ implies $t$-super potency of $R$.
\end{proof}

\begin{remark} \label{r:proof} It is interesting to note that in the proof above, it was easy to show that $R[X]_{PR[X]}/LR[X]_{P[X]}$ is a valuation domain using only the fact that $R/L$ is a valuation domain, but the proof that $LR[X]_{L[X]}=LR[X]_{P[X]}$ used not only the assumption that $L=LR_L$ but also the assumption that $R/L$ is a valuation domain.  Here is an example that shows the necessity of the latter assumption.  Let $F$ be a field, $k=F(u)$, $u$ an indeterminate, $V$ a 2-dimensional valuation domain of the form $k+P$ with height-one prime $L$, and $R=F+P$.  According to \cite[Theorem 19.15 and its proof]{g2}, denoting the common quotient field of $R$ and $V$ by $K$,  $Q:= (X-u)K[X] \cap R[X]$ is an upper to zero in $R[X]$ satisfying $Q \subseteq P[X]$.  We have $L=LV_L=LR_L$.  However, $R/L$ is not a valuation domain, and we claim that we do not have $LR[X]_{P[X]}=LR[X]_{L[X]}$.  To see this choose $a \in L$, $a \ne 0$, and $c \in P \setminus L$.  Then $a/(cX-cu) \in LR[X]_{L[X]}$.  Suppose that we can write $a/(cX-cu) = g/f$ with $g \in L[X]$ and $f \in R[X] \setminus P[X]$. We have $af=g(cX-cu)$, so that $f =a^{-1}g(cX-cu ) \in (X-u)K[X] \cap R[X] = Q \subseteq P[X]$, a contradiction.  This verifies the claim.
\end{remark}


\section{Pullbacks} \label{s:pullbacks}

Let $T$ be a domain, $M$ a maximal ideal of $T$, $\varphi: T \to k := T/M $ the natural projection, and $D$ a proper subring of $k$.  Then let $R = \varphi^{-1}(D)$ be the integral domain arising from the following pullback of canonical homomorphisms.

  $$  \begin{CD}
        R   @>>>    D\\
        @VVV        @VVV    \\
        T  @>\varphi>>   T/M = k.\\
\end{CD}$$

We list some properties that we shall need.

\begin{lemma} \label{l:tmaxpullback} Consider the pullback diagram above. \begin{enumerate}
\item $T$ is a flat $R$-module if and only if $k$ is the quotient field of $D$.
\item If $I$ is a nonzero finitely generated ideal of $D$, then $\varphi^{-1}(I)$ is a finitely generated ideal of $R$.
\item $t$-{\rm Max($R$)} $=$  $\{N \cap R \mid N \in$ $t$-{\rm Max($T$)}, $N \nsubseteq M\} \bigcup \{\varphi^{-1}(P') \mid P' \in$ $t$-{\rm Max($D$)}\}.  {\rm (}By convention, if $D$ is a field, then $(0)$ is a maximal $t$-ideal of $D$, in which case $M$ is a maximal $t$-ideal of $R${\rm )}.
\item If $N$ is a prime ideal of $T$ that is incomparable to $M$, then $R_{N\cap R}=T_N$.
\item Assume that $D$ is not a field.  If $I$ is a $t$-invertible ideal of $R$ with $I \supsetneq M$, then $\varphi(I)$ is a $t$-invertible ideal of $D$.  Conversely, if $I'$ is a $t$-invertible ideal of $D$, then $\varphi^{-1}(I')$ is a $t$-invertible ideal of $R$.
\end{enumerate}
\end{lemma}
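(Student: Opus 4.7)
The plan is to address each of the five items in turn, leaning on well-known pullback techniques. As a preliminary I would note that $M$ is a common ideal of $R$ and $T$ (in fact the conductor $(R:T)$) and that $R/M \cong D$ canonically, so ideals of $R$ containing $M$ correspond bijectively under $\varphi$ to ideals of $D$.

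For (1) I would cite the standard flatness criterion for this kind of pullback square: $T$ is $R$-flat iff the inclusion $D \hookrightarrow k$ is flat, which (since $k$ is a field and $D$ is a subring) is equivalent to $k$ being the quotient field of $D$. For (2), lift generators $d_1,\ldots,d_n$ of $I$ to $t_i \in R$ with $\varphi(t_i)=d_i$; then $\varphi^{-1}(I) = (t_1,\ldots,t_n)R + M$. Since $I \ne 0$, some $d_i$ is a unit in $k$, so the corresponding $t_i$ is a unit in $T_M$; a standard conductor manipulation (using $MT=M$) absorbs $M$ into $(t_1,\ldots,t_n)R$, yielding finite generation. For (3), combine (2) with the $t$-prime dichotomy: every maximal $t$-ideal of $R$ either contains $M$ (and then, via $\varphi$, corresponds to a maximal $t$-ideal of $D$, using that $\varphi^{-1}$ preserves finite type by (2)) or it does not (and then, by (4), arises as $N \cap R$ for a maximal $t$-ideal $N$ of $T$ not contained in $M$).

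For (4), since $N \nsubseteq M$ we may pick $m \in M \setminus N$. Then $m$ lies in $R \setminus (N \cap R)$, so $m$ is a unit in $R_{N\cap R}$. Given $t/b \in T_N$ with $b \in T \setminus N$, rewrite $t/b=(mt)/(mb)$; since $mt,mb \in MT=M \subseteq R$ and $mb \notin N \cap R$, this shows $t/b \in R_{N\cap R}$, proving $T_N \subseteq R_{N\cap R}$. The reverse containment is immediate from $R \subseteq T$ together with $R \setminus (N\cap R) \subseteq T \setminus N$, so equality holds.

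For (5), in the forward direction $I \supsetneq M$ makes $\varphi(I)=I/M$ a nonzero ideal of $D$; $t$-invertibility of $I$ furnishes a finitely generated $J \subseteq I^{-1}$ with $(IJ)^t=R$, and applying $\varphi$ should give $(\varphi(I)\varphi(J))^t=D$. In the reverse direction, (2) produces $\varphi^{-1}(I')$ finitely generated; after lifting a $t$-inverse $J'$ of $I'$ to $\varphi^{-1}(J') \subseteq R$, I would verify $t$-invertibility of $\varphi^{-1}(I')$ by checking the product $\varphi^{-1}(I') \cdot \varphi^{-1}(J')$ locally at each maximal $t$-ideal of $R$, using (3) and (4). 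The main obstacle will be (5): one must carefully track how $v$- and $t$-closures interact with $\varphi$ and $\varphi^{-1}$, since these operations do not commute with closures in general. The essential leverage is that $M$ is the conductor, so at maximal $t$-ideals not containing $M$ the rings $R$ and $T$ agree by (4), while at those containing $M$ everything is controlled by the quotient $R/M=D$; together these should force $(\varphi^{-1}(I')\,\varphi^{-1}(J'))^t=R$ and the dual statement going downward.
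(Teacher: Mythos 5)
Your proposal is more ambitious than the paper, which proves this lemma entirely by citation (to Fontana--Gabelli for (2) and to Gabelli--Houston for (1), (3), (4), (5)). Of your five arguments, only (4) is complete and correct, and it is exactly the standard argument: pick $m\in M\setminus N$, write $t/b=(mt)/(mb)$ with $mt,mb\in M\subseteq R$ and $mb\notin N\cap R$. The others have genuine gaps. For (1), the criterion you invoke is false: $k$ is \emph{always} flat over $D$ (it contains the quotient field $F$ of $D$, which is a localization of $D$, and is an $F$-vector space), so ``$D\hookrightarrow k$ flat'' cannot be equivalent to $k=F$, nor to flatness of $T$ over $R$. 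The correct route is Richman's characterization of flat overrings ($T_Q=R_{Q\cap R}$ for every prime $Q$ of $T$), which by your part (4) reduces the question to whether $T_M=R_M$, and that is where $k=F$ enters. For (2), knowing that some $t_i$ is a unit of $T_M$ does not let you absorb $M$ into $(t_1,\dots,t_n)R$ when $T$ is not local ($t_i$ may lie in other maximal ideals of $T$, so $M\nsubseteq t_iT$). The repair is cheap but different: since $t_1\notin M$ and $M$ is maximal in $T$, write $1=\sum t_is_i+\mu$ with $s_i\in T$, $\mu\in M$; then for $m\in M$ one has $m=\sum t_i(s_im)+\mu m$ with $s_im\in M\subseteq R$, so $M\subseteq(t_1,\dots,t_n)R+\mu R$ and $\varphi^{-1}(I)=(t_1,\dots,t_n,\mu)R$.

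Parts (3) and (5) are where the real content lies, and your sketch does not engage with it. Both rest on the compatibility of the $v$- and $t$-operations with $\varphi$ and $\varphi^{-1}$ --- e.g., that $M=(R:T)$ is divisorial in $R$, that $(\varphi^{-1}(J))^{-1}=\varphi^{-1}(J^{-1})$ for suitable fractional ideals $J$ of $D$, and that for primes $P\nsupseteq M$ the $t$-maximality of $P$ in $R$ matches that of the corresponding prime of $T$. These are precisely the theorems of Fontana--Gabelli and Gabelli--Houston that the paper cites; asserting ``applying $\varphi$ should give $(\varphi(I)\varphi(J))^t=D$'' assumes the conclusion. There is also a concrete flaw in your forward direction of (5): the finitely generated ideal $J\subseteq I^{-1}$ is a fractional ideal of $R$ that need not lie between $M$ and $T$, so $\varphi(J)$ is not defined; one must first replace $J$ by something comparable to $M$ before passing to $D$. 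You correctly identify this circle of issues as ``the main obstacle,'' but the proposal does not overcome it, so (3) and (5) remain unproved.
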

\begin{proof} Statement (1) is well-known (see \cite[Proposition 1.11]{gh2}), (2) is part of \cite[Corollary 1.7]{fg}), and (3) follows from \cite[Theorems 2.6, 2.18]{gh2} (but the ideas are from \cite{fg}). For (4), see, e.g. \cite[Theorem 1.9]{gh2}, and for (5), see \cite[Theorem 2.18 and Proposition 2.20]{gh2}.
\end{proof}

\begin{theorem} \label{t:potpullback} Consider the pullback diagram above. Then $R$ is $t$-potent if and only if each of the following conditions holds: \begin{enumerate}
\item $D$ is $t$-potent (or a field).
\item $N$ is $t$-potent for each $N \in$ $t$-{\rm Max($T$)} with $N \nsubseteq M$.
\item If $D$ is a field and $M$ is a $t$-ideal of $T$, then $M$ is $t$-potent in $T$.
\end{enumerate}
\end{theorem}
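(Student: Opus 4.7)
The plan uses Lemma~\ref{l:tmaxpullback}(3), which partitions $\tmx(R)$ into two types: (i) $N \cap R$ with $N \in \tmx(T)$ and $N \nsubseteq M$, and (ii) $\varphi^{-1}(P')$ with $P' \in \tmx(D)$ (allowing $P' = 0$ when $D$ is a field, in which case $\mathcal{M} = M$). Conditions (1)--(3) respectively supply $t$-potency for the max $t$-ideals ``responsible'' for each type, so the theorem reduces to transferring $t$-rigid ideals across the pullback square.

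For the forward direction I assume $R$ is $t$-potent and push a $t$-rigid ideal $I$ of each $\mathcal{M} \in \tmx(R)$ to the appropriate side. For (1), enlarge $I \subseteq \varphi^{-1}(P')$ by an element of $\mathcal{M} \setminus M$ so that $\varphi(I) \ne 0$; the image $\varphi(I) \subseteq P'$ is then finitely generated in $D$ by Lemma~\ref{l:tmaxpullback}(2), and any $Q' \in \tmx(D)$ containing $\varphi(I)$ pulls back to a max $t$-ideal of $R$ containing $I$, forcing $Q' = P'$ by rigidity. For (2), enlarge $I \subseteq N \cap R$ by an element $s$ with $\varphi(s) = 1$ (which exists because $\varphi(N) = k$), so $s \notin M$; then $IT \subseteq N$ is finitely generated in $T$, and any $N'' \supseteq IT$ satisfies $N'' \nsubseteq M$ (as $s \in N'' \setminus M$), $N'' \cap R = N \cap R$ by rigidity, and hence $N'' = N$ via the identification $T_{N''} = R_{N'' \cap R} = T_N$ from Lemma~\ref{l:tmaxpullback}(4). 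For (3), $I \subseteq M \subseteq R$ gives $IT \subseteq M$; a competitor $N''$ with $N'' \nsubseteq M$ yields $N'' \cap R = M$ and hence $M \subseteq N''$, forcing $N'' = M$ and contradicting $N'' \nsubseteq M$, while $N'' \subseteq M$ forces $N'' = M$ by maximality of $N''$ as a $t$-ideal (since $M$ is one).

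For the converse I reverse the transfer, case by case on $\mathcal{M} \in \tmx(R)$. In type (ii) with $D$ not a field, lift a $t$-rigid $I' = (d_1, \ldots, d_n) D \subseteq P'$ from (1) to $I = (r_1, \ldots, r_n) R \subseteq \mathcal{M}$ with $\varphi(r_i) = d_i$; type-(ii) competitors are excluded by rigidity of $I'$ in $D$. In type (ii) with $D$ a field (so $\mathcal{M} = M$) and $M$ a $t$-ideal of $T$, hypothesis (3) supplies a $t$-rigid $J \subseteq M$ in $T$, and since $J \subseteq R$ already, the same $J$ works in $R$: any type-(i) competitor $N' \cap R$ would give $J \subseteq N'$, contradicting rigidity of $J$ in $T$. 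In type (i), use (2) to get a $t$-rigid $J = (b_1, \ldots, b_m) T \subseteq N$; via $T_N = R_\mathcal{M}$, write $b_i = r_i / s_i$ with $r_i, s_i \in R$ and $s_i \notin \mathcal{M}$, set $s = \prod s_j$ and $c_i = s b_i \in R$, and adjoin $a \in \mathcal{M}$ with $\varphi(a) = 1$ to form $I = (a, c_1, \ldots, c_m) R \subseteq \mathcal{M}$. The element $a$ excludes all type-(ii) competitors since $\varphi(a) = 1$ lies in no proper ideal of $D$, and the rigidity of $J$ is intended to exclude the remaining type-(i) competitors through the localization identification.

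The main obstacle is this last exclusion. If $\mathcal{M}' = N' \cap R$ is a type-(i) competitor to $\mathcal{M}$, then $IT \subseteq N'$ implies $sJ \subseteq N'$, but this does not automatically force $J \subseteq N'$: primality of $N'$ only yields $s \in N'$ \emph{or} $J \subseteq N'$, and $s$ may well lie in $N'$ without lying in $N$. Overcoming this requires either (a) choosing the denominators $s_j$ so that their product $s$ avoids the competing primes---tenable when $T$ is flat over $R$, i.e., when $k$ is the quotient field of $D$ by Lemma~\ref{l:tmaxpullback}(1), in which case generators of $J$ can be taken already in $R$---or (b) a local argument using Lemma~\ref{l:tmaxpullback}(4) to identify $T_{N'}$ with $R_{\mathcal{M}'}$ and translate rigidity of $I$ in $R$ back to rigidity of $J$ in $T$. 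This second, bidirectional use of Lemma~\ref{l:tmaxpullback}(4) is the key technical bridge on which the argument hinges.
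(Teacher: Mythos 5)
Your overall strategy---splitting $\tmx(R)$ via Lemma~\ref{l:tmaxpullback}(3) and transferring rigid ideals across the square---is exactly the paper's, and your forward direction is essentially the paper's argument. The converse, however, has three genuine gaps. First, in type (ii) with $D$ not a field, the ideal $I=(r_1,\ldots,r_n)R$ with $\varphi(r_i)=d_i$ is only shown to avoid type-(ii) competitors; nothing prevents it from lying in some $N\cap R$ of type (i). For instance, in $R=\mathbb Z+x\mathbb Q[x]$ the element $2+x$ lifts the generator of $2\mathbb Z$ but also lies in $(2+x)\mathbb Q[x]\cap R$, so the resulting $I$ is not $t$-rigid. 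The paper instead uses $\varphi^{-1}(I')$, which is finitely generated by Lemma~\ref{l:tmaxpullback}(2) and \emph{contains $M$}, hence cannot sit inside any type-(i) maximal $t$-ideal. Second, when $D$ is a field you treat only the sub-case in which $M$ is a $t$-ideal of $T$; if $M$ is not a $t$-ideal of $T$, condition (3) is vacuous, yet $M$ is still a maximal $t$-ideal of $R$ whose potency must be established. The paper does this by using $M^{t_T}=T$ to produce a finitely generated $A\subseteq M$ with $A^{t_T}=T$, hence $A\nsubseteq L$ for every $L\in\tmx(T)\setminus\{M\}$, and then passing to the $R$-ideal $I$ on the same generators with $IT=A$. (That last device also repairs your ``the same $J$ works in $R$'' step: $J=\sum Tb_i$ need not be finitely generated as an $R$-ideal, so you must replace it by $\sum Rb_i$.)

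The third gap is the one you flag yourself, and it is the heart of the proof: in type (i) the cleared denominator $s$ may lie in a competing prime $N'$, so $sJ\subseteq N'$ does not force $J\subseteq N'$. Your fix (a) covers only the flat case $k=\mathrm{qf}(D)$, and fix (b) is circular---localizing at $\mathcal M'=N'\cap R$ runs into exactly the same obstruction, since one still needs $s\notin N'$ to recover $J\subseteq N'$ from $IT_{N'}\subseteq N'T_{N'}$. The paper's resolution is the idea you are missing: when $D$ is a field, take a finitely generated $A=\sum Ta_i\subseteq M$ with $A\nsubseteq L$ for all $L\in\tmx(T)\setminus\{M\}$ and set $J=\sum Ra_ib_j$. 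The products $a_ib_j$ lie in $MT=M\subseteq R$ automatically, so no denominators need clearing, and $JT=AB$ avoids every $L\ne M,N$; adjoining the element $tt'$ (your $a$) then excludes the lone remaining competitor. The general case is reduced to the flat case and this field case through the intermediate ring $S=\varphi^{-1}(F)$, $F$ the quotient field of $D$. Without the $A\cdot B\subseteq M$ trick and the passage through $S$, the type-(i) case does not close.
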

\begin{proof}  Suppose that $R$ is $t$-potent. 
If $D$ is not a field and $P' \in \tmx(D)$, then by Lemma~\ref{l:tmaxpullback}(3), $P:=\varphi^{-1}(P') \in \tmx(R)$, whence there is a $t$-rigid ideal   $I$ contained in $P$.  Then, again using Lemma~\ref{l:tmaxpullback}(3), it is easy to see that $\varphi(I)$ is a $t$-rigid ideal of $D$ contained in $P'$.   Hence $D$ is $t$-potent. Now let $N \in \tmx(T), N \nsubseteq M$.  Then (Lemma~\ref{l:tmaxpullback}(3)) $N \cap R \in \tm(R)$ and hence there is a $t$-rigid ideal $J$ contained in $N \cap R$. (In particular, $J \nsubseteq M$.) Then $JT$ is a $t$-rigid ideal of $T$ contained in $N$ (Lemma~\ref{l:tmaxpullback}(3)), and $N$ is $t$-potent. This holds whether $D$ is a field or not. If $D$ is a field, then $M$ is a maximal $t$-ideal and hence $t$-potent in $R$, and there is a $t$-rigid ideal $I$ of $R$ contained in $M$.  If $M$ is a (maximal) $t$-ideal of $T$, then $IT$ is a $t$-rigid ideal of $T$ contained in $M$, and hence $T$ is $t$-potent in this case.

For the converse, let $P \in \tmx(R)$.  If $P \supsetneq M$, then $P=\varphi^{-1}(P')$ for some $P' \in \tmx(D)$.  By assumption, there is a $t$-rigid ideal $C$ of $D$ contained in $P'$, and Lemma~\ref{l:tmaxpullback}(2,3) then implies that $\varphi^{-1}(C)$ is a $t$-rigid ideal of $R$ contained in $P$.

Next, suppose that $P=M$.  Then $D$ is a field.  By assumption $M$ is $t$-potent in $T$ or $M$ is not a $t$-ideal of $T$.  In the first case, let $A_1$ be a $t$-rigid ideal of $T$ contained in $M$.  In the second case, we have $M^{t_T}=T$  (where $t_T$ is the $t$-operation on $T$), and there is a finitely generated subideal $A_2$ of $M$ with $(A_2)^{t_T}=T$.  In either case, there is a finitely generated subideal $A$ of $M$ with $A \nsubseteq N$ for each $N \in \tmx(T) \setminus \{M\}$.  For such an $A$ we have $A=IT$ for some finitely generated ideal $I$ of $R$, and it is clear from the conditions satisfied by $A$ that $I$ is $t$-rigid in $R$.  

Finally, suppose that $P$ is incomparable to $M$.  Then $P=N \cap R$ for some $N \in \tmx(T)$ with $N \nsubseteq M$.  By assumption there is a $t$-rigid ideal $B$ of $T$ contained in $N$, and we may assume that $B$ contains an element $t \in T \setminus M$. Now $\varphi(t) \ne 0$, whence there is an element $t' \in T$ with $\varphi(tt')=1$.  This implies that $tt' \in R$, and, since $1-tt' \in M$, it is clear that $tt' \notin Q$ for each ideal $Q$ of $R$ such that $Q \supseteq M$. We consider three cases:

Case 1.  Suppose that $k$  is the quotient field of $D$.  Then $T$ is flat over $R$ (Lemma~\ref{l:tmaxpullback}), and hence $B=JT$ for some finitely generated ideal $J$ of $R$. 
By construction, $J$ is a $t$-rigid ideal of $R$ contained in $P= N \cap R$ in this case.

Case 2.  Suppose that $D$ is a field.  Then, arguing as in the ``$P=M$'' situation above, there is a finitely generated subideal $A$ of $M$ with $A \nsubseteq L$ for each $L \in \tmx(T) \setminus \{M\}$, and $A=IT$ for some finitely generated ideal $I$ of $R$.  Write $I=\sum_{i=1}^m Ra_i$ and $B=\sum_{j=1}^n Tb_j$, and let $J=\sum Ra_ib_j$.  Then $JT=IB \nsubseteq L$ for $L \in \tmx(T) \setminus \{M, N\}$.  It then follows easily that $J+Rtt'$ is a $t$-rigid ideal of $R$ contained in $P=N\cap R$.

Case 3. Suppose that $k$ is not the quotient field of $D$ and that $D$ is not a field, and put $S:=\varphi^{-1}(F)$, where $F$ is the quotient field of $D$.  By what has already been proved, $S$ is $t$-potent.  It then follows that $P=N \cap R$ is $t$-potent by Case 1 above.  This completes the proof.
\end{proof}

We next give an example, promised immediately after Proposition~\ref{p:increase}, of finite-type star operations $\st_1 \le \st_2$ on a domain $R$ and a $\st_2$-potent maximal $\st_2$-ideal that is $\st_1$-maximal but not $\st_1$-potent.

\begin{example} \label{e:increase} Let $F \subsetneq k$ be fields, $T=k[x_1, x_2, \ldots]$ a polynomial ring in countably many variables, and $R=F+M$, where $M$ is the maximal ideal of $T$ generated by the $x_i$.  In $R$, $M$ is both a maximal ($d$-)ideal and a maximal $t$-ideal.  Since $T$ is a Krull domain, it is $t$-potent, whence so is $R$ by Theorem~\ref{t:potpullback}.  In particular, $M$ is $t$-potent in $R$.  However, it is clear that each finitely generated subideal of $M$ is contained in infinitely many maximal ideals of ($T$ and hence of) $R$, and so $M$ is not $d$-potent.
\end{example}

\begin{theorem} \label{t:sppullback} Consider the pullback diagram at the beginning of this section. Then $R$ is $t$-super potent if and only if  $D$ is $t$-super potent and not a field, and each maximal $t$-ideal of $T$ not contained in $M$ is $t$-super potent.
\end{theorem}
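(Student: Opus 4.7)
The plan is to combine Theorem~\ref{t:potpullback} with Theorem~\ref{t:spchar}, which characterizes $t$-super potency as $t$-potency plus local $d$-super potency at each maximal $t$-ideal, and then detect local $d$-super potency via Theorem~\ref{t:val}. Lemma~\ref{l:tmaxpullback} supplies the structural facts about maximal $t$-ideals, $t$-invertibility, and localizations needed to move between $R$, $T$, and $D$.

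For the forward direction, assume $R$ is $t$-super potent. Theorem~\ref{t:potpullback} already forces $D$ to be $t$-potent or a field. To rule out the latter, I would observe that if $D$ is a field then $M$ is a maximal $t$-ideal of $R$ (Lemma~\ref{l:tmaxpullback}(3)) and $R_M = D + MT_M$: the inclusion $\supseteq$ is clear, while for $f \in T_M$ with $\varphi_M(f) \in D$ one uses surjectivity of $\varphi: T \to k$ to rewrite $f$ as an element of $R$ divided by an element of $R \setminus M$. Theorems~\ref{t:spchar} and~\ref{t:val} then give a divided prime $P_0 \subsetneq MT_M$ of $R_M$ with $R_M/P_0$ a valuation domain. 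Now choose $u \in k \setminus D$, a lift $t \in T$ with $\varphi(t) = u$, and $m \in MT_M \setminus P_0$. The principal ideals $(m + P_0)$ and $(tm + P_0)$ of $R_M/P_0$ are then incomparable: either containment reduces to a congruence $1 \equiv rt \pmod{MT_M}$ or $t \equiv r \pmod{MT_M}$ for some $r \in R_M$, and either forces $u \in D$. This contradicts $R_M/P_0$ being a valuation domain. For the remaining forward assertions, I start with a $t$-super rigid $I$ contained in the maximal $t$-ideal $P$ of $R$ corresponding (via Lemma~\ref{l:tmaxpullback}(3)) to the given $P' \in t\text{-Max}(D)$ or $N \in t\text{-Max}(T)$, $N \nsubseteq M$. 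Passing to $J = I + (a)$ for some $a \in P \setminus M$ keeps $J$ $t$-super rigid (any f.g.\ ideal containing $J$ contains $I$, hence is $t$-invertible). For the $D$-part I then apply Lemma~\ref{l:tmaxpullback}(2,3,5) to $\varphi(J)$; for the $T$-part I use Lemma~\ref{l:tmaxpullback}(4) to identify $R_{N \cap R}$ with $T_N$ and show that every f.g.\ $K \subseteq T$ containing $JT$ satisfies $KR_{N \cap R} = LR_{N \cap R}$ for some f.g.\ $L \subseteq R$ with $L \supseteq J$ (obtained by multiplying each generator of $K$ by a suitable element of $R \setminus (N \cap R)$), so that $t$-invertibility of $K$ follows from the $t$-invertibility of $L$.

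For the backward direction, Theorem~\ref{t:potpullback} yields that $R$ is $t$-potent, and I check $t$-super potency at each $P \in t\text{-Max}(R)$ using the two cases of Lemma~\ref{l:tmaxpullback}(3). If $P = N \cap R$ with $N \in t\text{-Max}(T)$, $N \nsubseteq M$, then $R_P = T_N$ (Lemma~\ref{l:tmaxpullback}(4)), and this is $d$-super potent by Theorem~\ref{t:spchar} applied in $T$; Theorem~\ref{t:spchar} applied back in $R$ then gives $P$ $t$-super potent. If $P = \varphi^{-1}(P')$ with $P' \in t\text{-Max}(D)$, I pick a $t$-super rigid $C \subseteq P'$ in $D$; the proof of Theorem~\ref{t:potpullback} already produces $\varphi^{-1}(C)$ as a $t$-rigid ideal of $R$ contained in $P$, and for any f.g.\ $F \supseteq \varphi^{-1}(C)$ one has $F \supseteq M$, so $\varphi(F)$ is a f.g.\ ideal of $D$ containing $C$, hence is $t$-invertible in $D$, and Lemma~\ref{l:tmaxpullback}(5) gives $F$ $t$-invertible in $R$. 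Thus $\varphi^{-1}(C)$ is $t$-super rigid in $R$.

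The main obstacle I expect is the forward argument that $D$ cannot be a field: it is not enough to work inside $R$ itself, because $R$ need not be local, so one must first identify $R_M$ as the secondary pullback $D + MT_M$ and only then manufacture incomparable principal ideals inside $R_M/P_0$. A secondary recurring subtlety is the super-rigidity transfer between the three rings, which forces the replacement $I \mapsto I + (a)$ to keep the image of the ideal out of $M$ and requires descending f.g.\ ideals of $T$ to f.g.\ ideals of $R$ with the same extension in $T_N = R_{N \cap R}$.
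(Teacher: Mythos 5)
Your proposal is correct and follows essentially the same route as the paper: Theorem~\ref{t:potpullback} plus the local characterization of Theorem~\ref{t:spchar}, with Lemma~\ref{l:tmaxpullback} mediating the transfer of (super) rigid ideals and $t$-invertibility among $R$, $T$, and $D$; your treatment of the case $P=\varphi^{-1}(P')$ in the converse fills in exactly the detail the paper leaves as ``argue more or less as above.'' The only real divergences are local: for ruling out $D$ a field the paper argues more directly that $aR_M+atR_M$ is non-principal for any nonzero $a\in M$ and $t\in T_M\setminus R_M$ (no appeal to Theorem~\ref{t:val} or a divided prime needed), and for the maximal $t$-ideals $N\nsubseteq M$ of $T$ the paper simply applies Theorem~\ref{t:spchar} twice via $T_N=R_{N\cap R}$ rather than hand-transferring finitely generated ideals.
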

\begin{proof} Assume that $R$ is $t$-super potent, and suppose, by way of contradiction, that $D$ is a field.  Then we have the following associated pullback diagram 

$$  \begin{CD}
        R_M   @>>>    D\\
        @VVV        @VVV    \\
        T_M  @>>>   k\\
\end{CD}$$
\smallskip

Choose $a \in M$, $a \ne 0$, and let $t \in T_M \setminus R_M$.  Then $at \notin aR_M$ since $t \notin R_M$, and $a \notin atR_M$, since $t^{-1} \notin R_M$.  However, this implies that $aR_M+atR_M$ is not principal, and hence that $aR_M$ is not $d$-super rigid. It follows that $MR_M$ is not $d$-super potent, and then, by Theorem~\ref{t:spchar}, that $M$ is not $t$-super potent, the desired contradiction. Thus $D$ is not a field.  

In the rest of the proof, we freely use Lemma~\ref{l:tmaxpullback}. Let $P'$ be a maximal $t$-ideal of $D$.  Then $P:=\varphi^{-1}(P')$ is a maximal $t$-ideal of $R$ properly containing $M$ and therefore contains a $t$-super rigid ideal $I$.  It is clear that $I':=\varphi(I)$ is contained in $P'$ and in no other maximal $t$-ideal of $D$.  Let $J' \supseteq I'$ be a finitely generated ideal of $D$.  Then, since $P$ is $t$-super potent, $J:=\varphi^{-1}(J')$ is a $t$-invertible ideal of $R$, and hence $\varphi(J)=J'$ is $t$-invertible in $D$.  Therefore, $D$ is $t$-super potent.
  
Now let $N \nsubseteq M$ be a maximal $t$-ideal of $T$. Then $N \cap R$ is a maximal $t$-ideal of $R$, and hence $T_N=R_{N \cap R}$ is $d$-super potent by Theorem~\ref{t:spchar}.  Therefore, since $N$ is $t$-potent by Theorem~\ref{t:potpullback}, $N$ is $t$-super potent by Theorem~\ref{t:spchar}.

For the converse, let $P \in \tmx(R)$.  If $P \supsetneq M$, then $\varphi(P)$ is $t$-super potent in $D$, and we can argue more or less as above to see that $P$ is $t$-super potent in $R$. 
Since $D$ is not a field, the only other possibility is $P=N \cap R$, where $N \in \tmx(T)$, $N \nsubseteq M$.  In this case, $t$-super potency of $N$ in $T$ yields $d$-super potency of $NT_N=(N \cap R)R_N$ (Theorem~\ref{t:spchar}).  Since $N \cap R$ is $t$-potent by Theorem~\ref{t:potpullback}, we may again apply Theorem~\ref{t:spchar} to conclude that $P=N \cap R$ is $t$-super potent.
\end{proof}

From Theorems~\ref{t:potpullback} and \ref{t:sppullback}, we can determine $t$-(super) potency in a large class of domains that appear frequently in the literature:

\begin{corollary} \label{c:psp} Let $D$ be a subdomain of the field $k$ and $x$ an indeterminate.  Let $R=D+xk[x]$ or $D+xk[[x]]$.  Then \begin{enumerate}
\item $R$ is $t$-potent if and only if $D$ is $t$-potent (or a field).
\item $R$ is $t$-super potent if and only if $D$ is $t$-super potent and not a field.
\end{enumerate}
\end{corollary}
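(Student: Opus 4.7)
The plan is to recognize $R$ as a pullback of the form studied in Theorems \ref{t:potpullback} and \ref{t:sppullback}, and then to verify that the ``top-ring'' hypotheses on $T$ are automatically satisfied in this particular setting. Set $T := k[x]$ in the first case and $T := k[[x]]$ in the second, and let $M := xT$. Then $T/M \cong k$ under the canonical projection $\varphi$, and $R = \varphi^{-1}(D)$ fits the pullback diagram of Section \ref{s:pullbacks} (we implicitly assume $D \subsetneq k$, as the pullback setup requires).

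The key auxiliary fact is that $T$ is a PID in both cases. Consequently every maximal ideal $N$ of $T$ is principal, hence $t$-invertible, and every finitely generated ideal $J \supseteq N$ is principal in the PID $T$, hence $t$-invertible. This shows that each maximal ideal of $T$ is $t$-super rigid, and in particular $t$-super potent. Two consequences follow: first, every $N \in \tmx(T)$ with $N \not\subseteq M$ is $t$-super potent (hence also $t$-potent), which discharges the second condition in each of Theorems \ref{t:potpullback} and \ref{t:sppullback}; second, $M = xT$ is itself principal, so it is a $t$-ideal of $T$ and $t$-potent in $T$, which discharges condition (3) of Theorem \ref{t:potpullback}.

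With the auxiliary conditions on $T$ trivially satisfied, statement (1) is immediate from Theorem \ref{t:potpullback}: $R$ is $t$-potent if and only if $D$ is $t$-potent or a field. Similarly, statement (2) is immediate from Theorem \ref{t:sppullback}: $R$ is $t$-super potent if and only if $D$ is $t$-super potent and not a field. There is no serious obstacle here; the entire argument reduces to recognizing the pullback structure and observing that PID-ness of $T$ makes each ``top-ring'' hypothesis in the two pullback theorems vacuous.
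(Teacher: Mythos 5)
Your proof is correct and matches the paper's intended argument: the paper states this corollary as an immediate consequence of Theorems~\ref{t:potpullback} and \ref{t:sppullback}, and your verification that the PID structure of $k[x]$ and $k[[x]]$ renders all the top-ring hypotheses automatic is exactly the missing routine check. Your parenthetical that $D \subsetneq k$ must be assumed (as in the pullback setup of Section~\ref{s:pullbacks}) is also the right reading, since otherwise part (2) would fail for $R = k[x]$.
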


Using Theorem~\ref{t:sppullback}, it is easy to give examples of $t$-super potent domains with non-$t$-super potent localizations: \bs

\begin{example} \label{e:loc} In the notation of Theorem~\ref{t:sppullback}, assume that $R$ is $t$-super potent. \begin{enumerate}
\item If the quotient field of $D$ is $F \ne k$, then $R_M$ is not $t$-super potent.  We may take $R$ integrally closed or not.
\item If $T$ is a one-dimensional local non-valuation domain, then $R_M$ is not $t$-super potent.
\end{enumerate}
\end{example}
\begin{proof} (1) In this case, let $S=\varphi^{-1}(F)$.  Then we have the pullback diagram 

$$  \begin{CD}
        R_M   @>>>    F\\
        @VVV        @VVV    \\
        T_M  @>>>   k,\\
\end{CD}$$
\smallskip

\noindent whence $R_M$ is not $t$-super potent by Theorem~\ref{t:sppullback}.  Let $x$ be an indeterminate, and $z$ an element of a field $k \supseteq \mathbb Q$.  Let $D= \mathbb Z$ and $T=\mathbb Q(z)[[x]]$ (so that $R=\mathbb Z +x\mathbb Q(z)[[x]]$).  In this case, if $z$ is an indeterminate, then $R$ is integrally closed.  On the other hand, if $z=\sqrt 2$, then $R$ is not integrally closed.

(2) If $k$ is not the quotient field of $D$, this follows from (1) .  If $k$ is the quotient field of $D$, then $R_M=T$ is not $t$-super potent by Corollary~\ref{c:locval}.
\end{proof}


\section{$t$-dimension one} \label{s:tdim1}

The primary goal of this section is to characterize generalized Krull domains using $t$-super potency.  We recall some definitions. First, a set $\mc P$ of prime ideals in a domain $R$ is a \emph{defining family} if $R=\bigcap_{P \in \mc P} R_P$. A defining family has \emph{finite character} (or is \emph{locally finite}) if each nonzero element $a \in R$ lies in at most finitely many elements of $\mc P$.  (Thus, in this terminology, if $\st$ is a finite-type star operation on $R$, then $R$ has finite $\st$-character if the defining family of maximal $\st$-ideals of $R$ has finite character.)  A prime $P$ of $R$ is \emph{essential} if $R_P$ is a valuation domain, and $R$ itself is an \emph{essential domain} if it possess a defining family of essential primes.  Finally, $R$ is a \emph{generalized Krull domain} if $R$ possesses a finite character defining family of height-one essential primes. For convenience we begin with a lemma, much of which comes from \cite{a} (and no doubt all of which is well known).

\begin{lemma} \label{l:spectral} Let $R$ be a domain and $\mc P$ a defining family for $R$.  Define $\st$ by $A^{\st}= \bigcap_{P \in \mc P} AR_P$ for each nonzero fractional ideal $A$ of $R$. Then: \begin{enumerate}
\item $\st$ is a star operation on $R$.
\item If $I$ is an integral ideal of $R$ for which $I^{\st} \ne R$, then $I \subseteq P$ for some $P \in \mc P$.
\item $P^{\st}=P$ for each $P \in \mc P$.
\item If $\mc P$ has finite character, then $\st$ has finite type.
\item If $\st$ has finite type, then: \begin{enumerate}
\item For each $P \in \mc P$, there is a maximal element $Q$ of $\mc P$ such that $P \subseteq Q$. Hence if $\mc P'$ denotes the set of maximal elements in $\mc P$, then $A^{\st}=\bigcap_{P \in \mc P'} AR_P$ for each nonzero fractional ideal $A$ of $R$.
\item Each proper $t$-ideal of $R$ is contained in some $P \in \mc P$.
\item $\st$-{\rm Max($R$)} $=\mc P'$.
\item If $\hgt(P)=1$ for each $P \in \mc P$ and $\mc Q$ denotes the set of height-one primes of $R$, then $\mc P =$ $t$-{\rm Max($R$)} $=$ $\st$-{\rm Max($R$)} $= \mc Q$.
\end{enumerate}
\end{enumerate}
\end{lemma}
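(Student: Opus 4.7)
The plan is to establish (1)--(5) in order; the real work is the verification in (4) that finite character forces $\st$ to be of finite type, while the remaining items follow from the star axioms, Zorn's lemma, and the inequality $\st \le t$ (valid because $t$ is the largest finite-type star operation on $R$).

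Parts (1)--(3) are routine. For (1), the axioms reduce to $(cA)R_P = c(AR_P)$, $R = \bigcap R_P$, $A \subseteq AR_P$, and, for idempotence, the observation that $A^{\st} \subseteq AR_P$ (the $Q = P$ factor in the intersection) promotes to $A^{\st}R_P \subseteq AR_P$, so that $A^{\st\st} = \bigcap_P A^{\st}R_P \subseteq \bigcap_P AR_P = A^{\st}$. For (2), I would prove the contrapositive: if $I$ meets the complement of every $P \in \mc P$, then $IR_P = R_P$ for each $P$, whence $I^{\st} = R$. For (3), extensivity gives $P \subseteq P^{\st}$, and for the reverse inclusion any $x \in P^{\st} \subseteq PR_P$ is of the form $p/s$ with $s \in R \setminus P$, forcing $x \in P$.

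The main obstacle is (4). Fix $x \in A^{\st}$; after clearing denominators I may assume $A$ is integral, whence $x \in R$. The task is to produce a finitely generated $J \subseteq A$ with $x \in JR_P$ for every $P \in \mc P$. Set $\mc T := \{P \in \mc P : A \subseteq P\}$: picking any nonzero $a_0 \in A$, finite character bounds $\mc T$ by the finite set of primes in $\mc P$ containing $a_0$, so $\mc T$ is finite. For each $P \in \mc T$, the containment $x \in AR_P$ provides $s_P \in R \setminus P$ with $s_P x \in A$. For $P \notin \mc T$ we already have $AR_P = R_P$, so it suffices to arrange $JR_P = R_P$; using finite character again, list the primes $P_1', \ldots, P_k' \notin \mc T$ containing $a_0$, and from each select $b_i \in A \setminus P_i'$ (available because $A \not\subseteq P_i'$). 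Then $J := (a_0, b_1, \ldots, b_k) + \sum_{P \in \mc T}(s_P x)$ is a finitely generated subideal of $A$, and a case analysis on whether $P$ lies in $\mc T$, equals some $P_i'$, or avoids $\{P_1', \ldots, P_k'\} \cup \mc T$ verifies $x \in JR_P$ throughout.

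For (5a), I apply Zorn's lemma to $\mc P_P := \{Q \in \mc P : P \subseteq Q\}$. Given a chain in $\mc P_P$ with union $Q_\infty$, the ideal $Q_\infty$ is prime; if $Q_\infty^{\st} = R$ the finite-type hypothesis would supply a finitely generated $J \subseteq Q_\infty$ with $J^{\st} = R$, but $J$ sits in some $Q_\alpha$ and (3) gives $R = J^{\st} \subseteq Q_\alpha^{\st} = Q_\alpha$, a contradiction; hence $Q_\infty^{\st} \ne R$, and (2) supplies an upper bound in $\mc P_P$. The restricted intersection identity then follows from $R_Q \subseteq R_P$ whenever $P \subseteq Q$. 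For (5b), the inequality $\st \le t$ makes every $t$-ideal a $\st$-ideal, so (2) applies. Part (5c) is a formal consequence of (5b) and (3): each maximal $\st$-ideal lies inside some $P \in \mc P$ which is itself a proper $\st$-ideal, so they coincide; conversely each $Q \in \mc P'$ is a proper $\st$-ideal contained in some maximal $\st$-ideal, which by the first half lies in $\mc P$ and, being above the $\mc P$-maximal element $Q$, equals it. Finally, (5d) follows because height one forces $\mc P' = \mc P$ and every height-one prime is a $t$-ideal, hence a $\st$-ideal, hence by (5b) contained in some $M \in \mc P$ of height one, forcing equality on height grounds.
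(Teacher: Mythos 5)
Your proof is correct, and for part (5) it follows essentially the same route as the paper: Zorn's lemma applied via (2) and (3) (the paper notes that the union of a chain in $\mc P$ is a $\st$-ideal, which is the same finite-type argument you make for $Q_\infty$), then $\st \le t$ for (5b), and the two-sided containment argument for (5c) and (5d). The genuine difference is in parts (1)--(4): the paper simply cites Anderson's \emph{Star operations induced by overrings} \cite{a} for these, whereas you prove them from scratch. The only substantive content there is (4), and your argument is sound: after reducing to an integral ideal $A$ and fixing $x\in A^{\st}\subseteq R$, finite character makes both $\mc T=\{P\in\mc P : A\subseteq P\}$ and the set of $P\in\mc P\setminus\mc T$ containing a fixed $a_0\in A$ finite, and the finitely generated ideal $J$ built from $a_0$, the elements $b_i$, and the products $s_Px$ does satisfy $x\in JR_P$ for every $P\in\mc P$ by the three-case check you describe. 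What your version buys is self-containment; what the paper's version buys is brevity. One stylistic quibble: in (5c) you say a maximal $\st$-ideal ``lies inside some $P\in\mc P$ \dots so they coincide'' --- to land in $\mc P'$ you should pass through (5a) to replace $P$ by a maximal element of $\mc P$ first (or note, as you implicitly do, that equality with a maximal $\st$-ideal forces $P$ to be maximal in $\mc P$); this is easily repaired and does not affect correctness.
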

\begin{proof}  Statements (1, 2, 3, 4) are in \cite{a}.  For (5a), Zorn's lemma applies since the union $P$ of a chain of elements of $\mc P$ satisfies $P^{\st} = P$ and by (2) $P \subseteq Q$ for some $Q \in \mc P$.  The ``hence'' statement follows easily.  
Statement (5b) follows from (2) in view of the fact that a proper $t$-ideal is also a proper $\st$-ideal. For (5c), if $Q \in \st\text{-Max}(R)$, then $Q \subseteq P$ for some $P \in \mc P'$ by (2).  But then $Q=P'$ by (3).  Hence $\st\text{-Max}(R) \subseteq P'$.  The reverse inclusion is trivial.  Finally, (5d) follows easily from (5a,b,c) and the fact that height-one primes are $t$-primes.
\end{proof}

\begin{remark} \label{r:max} With the notation of Lemma~\ref{l:spectral}, let $R$ be an almost Dedekind domain with exactly one non-invertible maximal ideal $M$, and let $\mc P$ denote the set of maximal ideals other than $M$.  Then, as is well known, $\mc P$ is a defining family for $R$, but the associated star operation does not have finite type. Indeed, conclusions (5b,d) fail to hold in this case: $M$ is a $t$-ideal but $M \nsubseteq P$ for all $P \in \mc P$.
\end{remark}

For our next result, recall that if $\st$ is a finite-type star operation on a domain $R$, then $R$ is said to have \emph{$\st$-dimension one} if each maximal $\st$-ideal of $R$ has height one.

\begin{theorem} \label{t:potent} Let $R$ be a domain and $\st$ a finite-type star operation on $R$.  Assume that $R$ has $\st$-dimension one and that $R$ is $\st$-potent. Then $R$ has finite $\st$-character.
\end{theorem}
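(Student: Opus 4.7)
The plan is a proof by contradiction. Assume some nonzero $a\in R$ lies in infinitely many maximal $\st$-ideals $M_1,M_2,\ldots$; the two hypotheses should combine to yield a contradiction. By $\st$-potency, each $M_i$ contains a finitely generated $\st$-rigid ideal $I_i$. By $\st$-dimension one, each localization $R_{M_i}$ is a one-dimensional local domain, so $M_iR_{M_i}$ is its unique nonzero prime and every nonzero ideal of $R_{M_i}$ has radical $M_iR_{M_i}$.

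The first step is to work inside $R_{M_1}$. Since every generator of the finitely generated $I_1$ lies in $\sqrt{aR_{M_1}}=M_1R_{M_1}$, some power satisfies $I_1^K R_{M_1}\subseteq aR_{M_1}$. Clearing denominators on a finite generating set $b_1,\ldots,b_p$ of $I_1^K$ yields, for each $\alpha$, an element $s_\alpha\in R\setminus M_1$ with $s_\alpha b_\alpha\in aR$; setting $s=s_1\cdots s_p\in R\setminus M_1$ gives $sI_1^K\subseteq aR$. Now for each $M_j$ containing $a$ with $j\neq 1$, the $\st$-rigidity of $I_1$ forces $I_1\not\subseteq M_j$, so $I_1^K R_{M_j}=R_{M_j}$. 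Localizing $sI_1^K\subseteq aR$ at $M_j$ and using $a\in M_j$ therefore yields $sR_{M_j}\subseteq M_jR_{M_j}$, i.e., $s\in M_j$. Thus $s$ is a nonzero element of $R$ that avoids $M_1$ yet lies in every other maximal $\st$-ideal containing $a$, hence in infinitely many maximal $\st$-ideals.

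I expect the main obstacle to be closing the argument: the element $s$ just produced is another counterexample of the same type as $a$, only shifted off of $M_1$. A natural continuation is to iterate, applying the same construction with $s$ in place of $a$ and some $M_{j_1}\ni s$ in place of $M_1$, producing a sequence $s^{(1)}=s, s^{(2)},\ldots$ whose supports omit progressively more of the $M_i$ but remain infinite. The genuine contradiction should then emerge either by showing that a suitable finite combination of the $s^{(k)}$ generates a finitely generated ideal whose behavior is incompatible with $\st$-potency, or by a cleaner one-shot argument exploiting the shifted element $s$ against the global representation $R=\bigcap_{M\in\st\text{-Max}(R)} R_M$. The delicate bookkeeping required to force the final contradiction is where I expect the bulk of the technical work.
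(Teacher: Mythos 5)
Your opening construction is exactly the one the paper uses: for each maximal $\st$-ideal $M_\lambda$ containing $a$, one-dimensionality of $R_{M_\lambda}$ together with a $\st$-rigid ideal $I_\lambda \subseteq M_\lambda$ yields an element $s_\lambda \in R \setminus M_\lambda$ and an exponent $n_\lambda$ with $s_\lambda I_\lambda^{n_\lambda} \subseteq (a)$, and $s_\lambda$ then lies in every other maximal $\st$-ideal containing $a$. But the proposal has a genuine gap: you stop after producing a single such $s$ and propose to iterate, conceding that the contradiction is still to be found. Iteration is the wrong move --- each pass just hands you another element lying in infinitely many maximal $\st$-ideals, and nothing forces the process to terminate or to collide with the hypotheses.

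The missing idea is to run the construction for \emph{all} $\lambda$ simultaneously and then invoke the finite-type hypothesis on $\st$, which is the one assumption your sketch never uses. The ideal $(a,\{s_\lambda\}_\lambda)$ is contained in no maximal $\st$-ideal of $R$ (any maximal $\st$-ideal containing $a$ is some $M_\lambda$, which misses $s_\lambda$), so $(a,\{s_\lambda\})^{\st}=R$; since $\st$ has finite type, already $(a,s_1,\dots,s_k)^{\st}=R$ for some finite set of indices. Now $(a,s_1,\dots,s_k)\,I_1^{n_1}\cdots I_k^{n_k}\subseteq (a)$, because each $s_j I_j^{n_j}\subseteq (a)$ and the product $I_1^{n_1}\cdots I_k^{n_k}$ sits inside each $I_j^{n_j}$. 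Since infinitely many maximal $\st$-ideals contain $a$, pick one, say $M$, outside $\{M_1,\dots,M_k\}$. Then $(a,s_1,\dots,s_k)\nsubseteq M$ (its $\st$-closure is $R$) and each $I_j\nsubseteq M$ by rigidity, so the product above is not contained in the prime $M$; yet it lies in $(a)\subseteq M$. That is the contradiction, and it is a one-shot argument rather than an induction.
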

\begin{proof} Denote the set of maximal $\st$-ideals of $R$ by $\{M_{\gamma}\}_{\gamma \in \Gamma}$.  For each $\gamma$, choose a $\st$-rigid ideal $I_{\gamma}$ contained in $M_{\gamma}$. Now suppose, by way of contradiction, that $a$ is a nonzero element of $R$ and $\Lambda$ is an infinite subset of $\Gamma$ with $a \in M_{\lambda}$ for $\lambda \in \Lambda$ and $a \notin M_{\gamma}$ for $\gamma \in \Gamma \setminus \Lambda$.  For $\lambda \in \Lambda$, $R_{M_{\lambda}}$
 is one-dimensional, and hence there is an element $s_{\lambda} \in R \setminus M_{\lambda}$ and a positive integer $n_{\lambda}$ for which $s_{\lambda}I_{\lambda}^{n_{\lambda}} \subseteq (a)$.  By construction, $(a,\{s_{\lambda}\})^{\st}=R$, whence $(a,s_1, \ldots, s_k)^{\st}=R$ for some finite subset $\{1, \ldots, k\}$ of $\Lambda$.  On the other hand, $(a,s_1, \ldots, s_k)I_1^{n_1} \cdots I_k^{n_k} \subseteq (a)$, and since $\Lambda$ is infinite, there is a maximal $\st$-ideal $M \in \{M_{\lambda}\}_{\lambda \in \Lambda} \setminus \{M_1, \ldots, M_k\}$ with $a \in M$.  However, $(a,s_1, \ldots, s_k) \nsubseteq M$ (since $(a,s_1, \ldots, s_k)^{\st}=R$ and) $I_j \nsubseteq M$ for $j=1, \ldots,k$, the desired contradiction.
 \end{proof}
 
The assumption on the $\st$-dimension in Theorem~\ref{t:potent} is necessary; for example, the Pr\"ufer domain $\mathbb Z+X\mathbb Q[[X]]$ is $d$-(super) potent but does not have finite $d$-character (note that $d=t$ here).

Theorem~\ref{t:potent} is, at first glance, a generalization of part of \cite[Corolllary 1.7]{acz}, which states that a $t$-potent domain of $t$-dimension one has finite $t$-character.  In fact, Theorem~\ref{t:potent} actually follows from \cite[Corollary 1.7]{acz}.  Indeed, if $R$ is as in Theorem~\ref{t:potent}, then Lemma~\ref{l:spectral} shows that $t$-{\rm Max($R$)} $= \st$-{\rm Max($R$)}.  (However, it is not generally the case that $\st=t$.) 
We have included the proof given above, since it seems much more conceptual than the one given in \cite{acz}.

As mentioned in the paragraph following Proposition~\ref{p:increase}, it is possible to have finite-type star operations $\st_1 \le \st_2$ on a domain $R$ with $R$ $\st_1$-potent but not $\st_2$-potent.  Indeed, this phenomenon can occur in a 2-dimensional P$v$MD.  We are grateful to the referee for suggesting the following construction.

\begin{example} \label{e:not potent} Let $T$ be the absolute integral closure of $\mathbb Z[X]$.  Since $\mathbb Z[X]$ is a Krull domain, $T$ is a P$v$MD, as was shown by H. Pr\"ufer \cite{pr} (see also the more recent paper by F. Lucius \cite{l}).  Moreover, it follows (Krull \cite[Satz 9]{kr}) that, since $\mathbb Z[X]$ has $t$-dimension one, so does $T$.  Now let $R$ be the localization of $T$ at a maximal ideal lying over $(2,X)$ in $\mathbb Z[X]$. Then $R$ is a (local and hence) $t$-potent domain of $t$-dimension one.  However, $R$ does not have finite $t$-character (since the ring of algebraic integers does not have finite character \cite[Proposition 42.8]{g2}), and hence $R$ is not $t$-potent by Theorem~\ref{t:potent} (or \cite[Corollary 1.7]{acz}).
\end{example}


In \cite{g}, Gilmer introduced the notion of sharpness.  The definition amounts to the following.  Call a maximal ideal $M$ of a domain $R$ \emph{sharp} if $\bigcap \{R_N \mid N \in \rm{Max}(R), N \ne M\} \nsubseteq R_M$, and call $R$ \emph{sharp} if each maximal ideal of $R$ is sharp.   In \cite{g} Gilmer focussed  on one-dimensional domains and proved that a sharp almost Dedekind is a Dedekind domain. (In a later paper, Gilmer and Heinzer \cite{gh} extended the ideas to higher dimensions, primarily in the setting of Pr\"ufer domains.)  The notion of sharpness was extended to star operations $\st$ of finite type in \cite[Remark 1.4]{ghl}: a maximal $\st$-ideal $M$ of a domain $R$ is $\st$-sharp if $\bigcap R_N \nsubseteq R_M$, where the intersection is taken over all maximal $\st$-ideals $N \ne M$.  Hence, for our purposes it is convenient to relabel ``sharp'' as ''$d$-sharp.''  It is relatively easy to prove that a $t$-potent maximal $t$-ideal must be $t$-sharp (see below), but this cannot be extended to arbitrary finite-type star operations.  In particular, it is not true for the $d$-operation, as can be seen by observing that maximal ideals of $k[x,y]$ are $d$-potent (as are the maximal ideals of any Noetherian domain) but are not $d$-sharp: if $M$ is maximal in $R:=k[x,y]$ and $u \in \bigcap \{R_N \mid N \in \mx(R), N \ne M\}$, then we must have $u \in R$, lest $(R:_R Ru)$ be contained in a height one prime and hence in infinitely many maximal ideals. 

\begin{proposition} \label{p:sharp} Let $R$ be a domain.  \begin{enumerate}
\item If $M$ is a $t$-potent maximal $t$-ideal of $R$, then $M$ is $t$-sharp.
\item If $\st$ is a finite-type star operation on $R$ and $M$ is a $\st$-super potent maximal $\st$-ideal of $R$, then $M$ is $\st$-sharp.
\end{enumerate}
\end{proposition}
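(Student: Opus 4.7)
The plan is to exhibit, in each case, an element of $I^{-1}$ that lies in $R_N$ for every maximal star-ideal $N \ne M$ but not in $R_M$, where $I$ is a rigid or super rigid ideal contained in $M$. A common first step works whenever $I \subseteq M$ is $\st$-rigid (take $\st = t$ in (1)): for any maximal $\st$-ideal $N \ne M$ we have $I \nsubseteq N$, so pick $b \in I \setminus N$; then for every $y \in I^{-1}$, $yb \in R$ yields $y = (yb)/b \in R_N$. Thus $I^{-1} \subseteq \bigcap_{N \ne M} R_N$, and it remains to locate an element of $I^{-1}$ outside $R_M$.

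For (1), take a $t$-rigid ideal $I \subseteq M$. Since $I$ is finitely generated and contained in the proper $t$-ideal $M$, $I^v = I^t \subseteq M \subsetneq R$, so $I^{-1} \supsetneq R$. If $I^{-1}$ were contained in $R_M$, then combining with the previous paragraph would give $I^{-1} \subseteq \bigcap_{N \in t\text{-Max}(R)} R_N = R$, a contradiction; hence $M$ is $t$-sharp.

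The main obstacle in (2) is that the reasoning of (1) collapses: $I^{\st} \subsetneq R$ no longer forces $I^v \subsetneq R$, so one cannot conclude $I^{-1} \supsetneq R$ from $\st$-rigidity alone. To bypass this I plan to use $\st$-invertibility directly. Taking $I \subseteq M$ that is $\st$-super rigid, we have $(II^{-1})^{\st} = R$, and since $M$ is a $\st$-ideal, $II^{-1} \nsubseteq M$; so I may write $s = \sum_{i=1}^{n} b_i y_i \in R \setminus M$ with $b_i \in I$ and $y_i \in I^{-1}$. If every $y_i$ lay in $R_M$, then each $b_i y_i$ would lie in $MR_M$ (because $b_i \in M$), forcing $s \in MR_M \cap R = M$, contradicting $s \notin M$. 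Hence some $y_i \in I^{-1} \setminus R_M$, and $M$ is $\st$-sharp.
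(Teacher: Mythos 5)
Your proposal is correct. Part (1) is essentially the paper's argument in different clothing: the paper picks $u\in I^{-1}\setminus R$ and shows $(R:_R Ru)\subseteq M$ while $(R:_R Ru)\nsubseteq N$ for $N\ne M$, whereas you run the contrapositive through the identity $\bigcap_{N\in t\text{-Max}(R)}R_N=R$; both hinge on $I^v=I^t\subseteq M$ forcing $I^{-1}\supsetneq R$. In part (2) you genuinely diverge. You correctly identify the obstruction (for a general finite-type $\st$ one only knows $I^{\st}\subseteq I^v$, so $I^{\st}\ne R$ does not yield $I^{-1}\ne R$). The paper resolves this by invoking Theorem~\ref{t:increase2} to conclude that $M$ is also a maximal $t$-ideal, after which the argument of (1) applies verbatim. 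You instead use the $\st$-invertibility of $I$ directly: from $(II^{-1})^{\st}=R$ and $M^{\st}=M$ you get $II^{-1}\nsubseteq M$, write $s=\sum b_iy_i\notin M$, and observe that if all $y_i$ lay in $R_M$ then $s\in MR_M\cap R=M$. This buys two things: the proof of (2) is self-contained, not routed through the comparison theorem for star operations, and it visibly uses only that $M$ contains a $\st$-rigid, $\st$-invertible ideal rather than the full strength of $\st$-super potency. The paper's route, by contrast, makes the reduction to the $t$-operation explicit, which fits its broader theme that $t$-super potency is the weakest form of super potency. Both arguments are complete and correct.
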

\begin{proof} (1) This follows easily from \cite[proof of Theorem 1.2]{ghl}.  Here is a direct proof: Choose a $t$-rigid ideal of $R$ contained in $M$.  Since $I^v =I^t \subseteq M$, $I^{-1} \ne R$.  Choose $u \in I^{-1} \setminus R$.  Then $I \subseteq (R:_R Ru)$, and hence $(R:_R Ru) \nsubseteq N$ for each $N \in \tmx(R)$ with $N \ne M$.  On the other hand, since $u \notin R$, we must have $(R:_R Ru) \subseteq M$.  Hence $u \in \bigcap \{R_N \mid N \in \tmx(R), N \ne M\} \setminus R_M$, as desired.

(2) Let $\st$ be a finite-type star operation on $R$, $M$ be a $\st$-super potent maximal $\st$-ideal, and $I$ a $\st$-super rigid ideal contained in $M$. Since $M$ is also a (maximal) $t$-ideal (Theorem~\ref{t:increase2}), we have $I^{-1} \ne R$.  Then, as in the proof of (1), if we choose $u \in I^{-1} \setminus R$, then $u \in \bigcap \{R_N \mid N \in \textrm{$\st$-Max}(R), N \ne M\} \setminus R_M$.
\end{proof}

We observe that a $t$-sharp maximal $t$-ideal need not be $t$-potent (\cite[Example 1.5]{ghl}).  To force $t$-sharpness to imply $t$-potency, we add a finiteness condition. Recall that a fractional $t$-ideal $I$ of a domain $R$ has \emph{finite type} if $I=J^v$ for some finitely generated fractional ideal $J$.  We then say that $R$ is \emph{$v$-coherent} if $I^{-1}$ has finite type for each finitely generated fractional ideal $I$ of $R$.  (The notion of $v$-coherence, with a different name, was introduced by El Abidine \cite{e}.)  We then have from \cite[Theorem 1.6]{ghl} that a $v$-coherent $t$-sharp domain is $t$-potent.  The next result is immediate.

\begin{corollary} \label{c:sharp} A $v$-coherent $t$-sharp domain of $t$-dimension one has finite $t$-character. \qed
\end{corollary}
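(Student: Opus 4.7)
The plan is to observe that this corollary is an immediate combination of two results already in hand, so the proof reduces to citing them in the correct order. First, I would invoke \cite[Theorem 1.6]{ghl}, quoted in the paragraph just preceding the statement: a $v$-coherent $t$-sharp domain is $t$-potent. Applying this to our $R$, we obtain that $R$ is $t$-potent.

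Next, I would apply Theorem~\ref{t:potent} (with $\st = t$). Since $R$ is $t$-potent and has $t$-dimension one by hypothesis, Theorem~\ref{t:potent} yields that $R$ has finite $t$-character, which is exactly the desired conclusion.

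There is no real obstacle here, since all the work has been done earlier: the $v$-coherence plus $t$-sharpness is used purely to upgrade the hypothesis to $t$-potency, and Theorem~\ref{t:potent} handles the passage from $t$-potency (under the $t$-dimension one assumption) to finite $t$-character. The only thing one might want to double-check is that no subtle hypothesis is being smuggled in---for instance, that \cite[Theorem 1.6]{ghl} really does give $t$-potency of every maximal $t$-ideal and not merely of some, but the statement is global, so this is automatic. Thus the proof is essentially a one-line citation chain.
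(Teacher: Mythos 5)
Your proof is correct and is exactly the argument the paper intends: the corollary is stated as immediate from the preceding remark that a $v$-coherent $t$-sharp domain is $t$-potent (citing \cite[Theorem 1.6]{ghl}) combined with Theorem~\ref{t:potent} applied with $\st = t$. Nothing further is needed.
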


The above-mentioned theorem of Gilmer follows easily:

\begin{corollary} {\cite[Theorem 3]{g}} \label{c:gilmer} Let $R$ be a $d$-sharp almost Dedekind domain.  Then $R$ is a Dedekind domain.
\end{corollary}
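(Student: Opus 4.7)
The plan is to reduce the statement to Corollary~\ref{c:sharp} by checking that an almost Dedekind domain satisfies all of its hypotheses, and then invoke a standard fact that an almost Dedekind domain of finite character is Dedekind.

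First I would observe that for an almost Dedekind domain $R$, every nonzero prime is maximal and locally principal, so $\tmx(R)$ coincides with $\mx(R)$ and $R$ has $t$-dimension one. Consequently, $d$-sharpness and $t$-sharpness coincide, so the hypothesis says that $R$ is $t$-sharp. Next, since an almost Dedekind domain is Pr\"ufer, every nonzero finitely generated fractional ideal $I$ is invertible, and therefore $I^{-1}$ is finitely generated, so $I^{-1}=(I^{-1})^v$ has finite type. Hence $R$ is $v$-coherent.

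At this point the hypotheses of Corollary~\ref{c:sharp} are in force: $R$ is a $v$-coherent $t$-sharp domain of $t$-dimension one. That corollary therefore gives finite $t$-character, which, since $\tmx(R)=\mx(R)$, amounts to ordinary finite character. The final step is to invoke the well-known fact that an almost Dedekind domain of finite character is Dedekind (each maximal ideal $M$ is $t$-invertible because, by finite character, one can construct $s\in M^{-1}\setminus R$ witnessing $MM^{-1}\nsubseteq N$ for all other maximal $N$, whence $M$ is invertible and $R$ is Dedekind).

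The only step that might look like an obstacle is the verification of $v$-coherence, but it is essentially immediate from the Pr\"ufer condition; the conceptual content of the argument has already been packaged into Corollary~\ref{c:sharp}, so the whole proof is just a bookkeeping exercise of matching hypotheses.
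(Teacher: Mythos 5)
Your proof is correct and follows essentially the same route as the paper's: both reduce to Corollary~\ref{c:sharp} by noting that a Pr\"ufer (hence almost Dedekind) domain is $v$-coherent and that $d$ and $t$ coincide there, then conclude via the standard fact that a finite-character almost Dedekind domain is Dedekind. You simply supply a bit more detail at each step than the paper does.
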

\begin{proof} Any Pr\"ufer domain is $v$-coherent.  Moreover, the $d$- and $t$-operations coincide in a Pr\"ufer domain.  Hence $R$ has finite character by Corollary~\ref{c:sharp}, and it is well known that this implies that $R$ is a Dedekind domain.
\end{proof}

We now turn to the characterization of generalized Krull domains.  Since these domains are completely integrally closed, the next result will prove useful.  (Recall that a domain $R$ with quotient field $K$ is \emph{completely integrally closed} if, whenever $a \in R$ and $u \in K$ are such that $au^n \in R$ for each positive integer $n$, then $u \in R$.)

\begin{lemma} \label{l:cic} Let $R$ be a completely integrally closed domain and $M$ a $t$-super potent maximal $t$-ideal of $R$.  Then $\hgt(M)=1$.
\end{lemma}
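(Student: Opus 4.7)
The plan is to combine $t$-super rigidity of an ideal contained in $M$ with the hypothesis of complete integral closure to force the prime $\bigcap_{n=1}^\infty(I^n)^t$ to be zero, and then use Theorem~\ref{t:observations}(8) to rule out any nonzero prime strictly below $M$.

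First I would choose a $t$-super rigid ideal $I\subseteq M$. In particular $I$ (and hence each $I^n$) is $t$-invertible, so $I^v=I^t$ and $(I^n)^t$ is $t$-invertible for every $n$. This gives the characterization $q\in (I^n)^t$ if and only if $q\,(I^n)^{-1}\subseteq R$. Using the easy inclusion $(I^{-1})^n\subseteq (I^n)^{-1}$ (a product $u_1\cdots u_n\cdot a_1\cdots a_n$ with $u_i\in I^{-1}$, $a_i\in I$ rearranges to a product of elements $u_ia_i\in R$), it follows that for $q\in\bigcap_{n=1}^\infty (I^n)^t$ and any $u\in I^{-1}$ we have $q u^n\in R$ for every $n\ge 1$. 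Complete integral closure of $R$ then forces $u\in R$, whence $I^{-1}=R$ and so $I^t=I^v=R$. But this is impossible because $I\subseteq M\subsetneq R$ and $M$ is a $t$-ideal. Hence $Q:=\bigcap_{n=1}^\infty(I^n)^t=(0)$; by Theorem~\ref{t:observations}(7) this intersection is already known to be prime, which we shall not even need once we have shown it is zero.

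Next, I would argue by contradiction: suppose $\hgt(M)\ge 2$, and pick a nonzero prime $P$ with $(0)\subsetneq P\subsetneq M$. To apply Theorem~\ref{t:observations}(8) I need $I\not\subseteq P$; if it happens that $I\subseteq P$, I would choose $a\in M\setminus P$ and replace $I$ by $I+(a)$, which is still finitely generated and still $t$-super rigid by Theorem~\ref{t:observations}(1) since its $t$-closure contains $I$. With this choice Theorem~\ref{t:observations}(8) gives $P\subseteq \bigcap_{n=1}^\infty (I^n)^t=(0)$, contradicting $P\ne (0)$. Therefore $\hgt(M)\le 1$, and since $M$ is a nonzero maximal $t$-ideal we conclude $\hgt(M)=1$.

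The main technical point is really the first step: one has to be careful about the $t$-arithmetic of inverses, noting that $t$-invertibility of $I$ (not merely $v$-invertibility) is what delivers the clean equivalence $q\in (I^n)^t \Leftrightarrow q\,(I^n)^{-1}\subseteq R$, which in turn is what lets complete integral closure bite. Everything after that is bookkeeping with Theorem~\ref{t:observations}.
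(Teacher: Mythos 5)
Your proof is correct and follows essentially the same route as the paper: both arguments hinge on Theorem~\ref{t:observations}(8) to place a putative nonzero prime $P \subsetneq M$ inside $\bigcap_{n=1}^{\infty}(I^n)^t$ for a suitably chosen $t$-super rigid $I$, and then contradict complete integral closure. The only difference is that where the paper simply cites \cite[Corollary 3.4]{aaz} for the fact that complete integral closure forces $\bigcap_{n=1}^{\infty}(I^n)^t=(0)$, you prove that step inline (correctly, via $(I^{-1})^n \subseteq (I^n)^{-1}$ and almost integrality of each $u \in I^{-1}$ over $R$), and you also spell out the adjustment $I \mapsto I+(a)$ guaranteeing $I \nsubseteq P$, which the paper leaves implicit.
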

\begin{proof} We proceed contrapositively.  Suppose that $M$ is a $t$-super potent maximal $t$-ideal of $R$ and that $P$ is a nonzero prime properly contained in $M$.  Choose a $t$-super rigid ideal $I \subseteq M$ with  $I \nsubseteq P$.  By Theorem~\ref{t:observations}, $P \subseteq \bigcap (I^n)^{\st}$, and hence $\bigcap (I^n)^{\st} \ne (0)$.  Therefore, $R$ is not completely integrally closed by \cite[Corollary 3.4]{aaz}.
\end{proof}

Recall that a domain $R$ is a \emph{Pr\"ufer $v$-multiplication domain} (P$v$MD) if each nonzero finitely generated ideal of $R$ is $t$-invertible; it is well known that $R$ is a P$v$MD if and only if each maximal $t$-ideal of $R$ is essential (note that the set of maximal $t$-ideals is always a defining family). 

\begin{theorem} \label{t:genkrull2} The following statements are equivalent for a domain $R$. \begin{enumerate}
\item $R$ is a generalized Krull domain.
\item $R$ is a $t$-potent essential domain of $t$-dimension one.
\item $R$ is a $t$-potent P$v$MD of $t$-dimension one.
\item $R$ is a completely integrally closed $t$-super potent domain.
\item $R$ is a $t$-super potent domain of $t$-dimension one.
\end{enumerate}
\end{theorem}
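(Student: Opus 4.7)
My plan is to treat $(5)$ as a hub and establish the cycle $(5) \Rightarrow (2) \Rightarrow (1) \Rightarrow (5)$ together with the separate equivalences $(5) \Leftrightarrow (3)$ and $(5) \Leftrightarrow (4)$. The direction $(5) \Rightarrow (2), (3), (4)$ is quick: by Corollary~\ref{c:locval}, $t$-super potency combined with $t$-dimension one forces $R_M$ to be a rank-one valuation domain for every $M \in \tmx(R)$, whence $R$ is essential, a P$v$MD, and completely integrally closed (being an intersection of rank-one valuations). In the reverse directions, $(4) \Rightarrow (5)$ is immediate from Lemma~\ref{l:cic}, and $(3) \Rightarrow (5)$ is immediate from Theorem~\ref{t:spchar} since each $R_M$, being a valuation domain, is $d$-super potent.

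For $(1) \Rightarrow (5)$, I would take a finite-character defining family $\mc P$ of height-one essential primes. Lemma~\ref{l:spectral}(4,5d) identifies $\mc P$ with $\tmx(R)$ and forces $t$-dimension one, and each $R_M$ is then a rank-one valuation domain, hence $d$-super potent. Finite character produces $t$-rigid ideals easily: given a maximal $t$-ideal $M$ and a nonzero $a \in M$, list the finitely many maximal $t$-ideals $M = M_1, M_2, \ldots, M_n$ containing $a$, pick $b_i \in M \setminus M_i$ for $i \ge 2$, and set $b = b_2 \cdots b_n$; then $(a, b)$ is contained in no maximal $t$-ideal other than $M$. Theorem~\ref{t:spchar} then delivers $t$-super potency.

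The crux is $(2) \Rightarrow (1)$. Given $R$ $t$-potent, essential, of $t$-dimension one, Theorem~\ref{t:potent} supplies finite $t$-character. Let $\mc P$ be an essential defining family and $\mc P^*$ its maximal members; then $R = \bigcap_{P \in \mc P^*} R_P$. The key technical step is a sub-lemma: every essential prime $P$ is a $t$-prime. Indeed, for a finitely generated $J = (j_1, \ldots, j_n) \subseteq P$, the ideal $JR_P$ is principal in the valuation domain $R_P$, say $JR_P = j_i R_P$ with $j_i \in J$; then for each $k$ one has $s_k j_k \in j_i R$ for some $s_k \notin P$, and taking $s = \prod_k s_k \notin P$ gives $sJ \subseteq (j_i) \subseteq P$, whence $sJ^v = (sJ)^v \subseteq (j_i)^v = (j_i) \subseteq P$, and primality of $P$ yields $J^v \subseteq P$. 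Combined with $t$-dimension one, each nonzero $P \in \mc P^*$ is a maximal $t$-ideal of height one, and the inclusion $\mc P^* \subseteq \tmx(R)$ transfers finite $t$-character to $\mc P^*$. Thus $\mc P^*$ is a finite-character defining family of height-one essential primes, exhibiting $R$ as a generalized Krull domain. The main obstacle is precisely this sub-lemma; once it is in hand, the cycle closes from results already established.
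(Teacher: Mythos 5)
Your proposal is correct in substance and rests on exactly the same ingredients as the paper's proof --- Lemma~\ref{l:spectral}, Theorem~\ref{t:potent}, Theorem~\ref{t:spchar}, Corollary~\ref{c:locval}, Lemma~\ref{l:cic}, and the fact that an essential prime is a $t$-prime --- but you arrange them in a different implication graph: the paper runs the single cycle $(1)\Rightarrow(2)\Rightarrow(3)\Rightarrow(4)\Rightarrow(5)\Rightarrow(1)$, whereas you use $(5)$ as a hub. The mathematical content of each edge matches the corresponding piece of the paper's argument; the one genuine addition is that you prove the sub-lemma ``essential primes are $t$-primes'' from scratch (the localization/denominator-clearing argument), where the paper simply cites \cite[Lemma 3.17]{k}. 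Your self-contained version is correct and arguably makes the proof more readable. One small remark on $(2)\Rightarrow(1)$: introducing $\mc P^*$ as the set of maximal members of $\mc P$ and asserting $R=\bigcap_{P\in\mc P^*}R_P$ \emph{before} the sub-lemma is not yet justified (maximal members need not exist or form a defining family for an arbitrary $\mc P$); but once the sub-lemma and $t$-dimension one show that every nonzero member of $\mc P$ is already a height-one maximal $t$-ideal, the members are pairwise incomparable and $\mc P^*$ is all of $\mc P$ (up to discarding $(0)$), so nothing is lost --- you could simply drop $\mc P^*$.

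There is one step that fails as literally written, in $(1)\Rightarrow(5)$: taking $b_i\in M\setminus M_i$ and setting $b=b_2\cdots b_n$ does \emph{not} guarantee that $(a,b)$ avoids $M_i$ for $i\ge 2$, because some factor $b_j$ with $j\ne i$ may well lie in $M_i$, forcing $b\in M_i$ and hence $(a,b)\subseteq M_i$. The fix is trivial and standard: use the ideal $(a,b_2,\dots,b_n)$ instead of $(a,b_2\cdots b_n)$, or invoke prime avoidance to choose a single $b\in M\setminus\bigcup_{i\ge 2}M_i$ (possible since the $M_i$ are incomparable primes) and take $(a,b)$. This is a detail the paper elides entirely (``$R$ is $t$-potent since $\mc P$ has finite character''), so you are filling a gap the paper leaves to the reader, just with the wrong finitely generated ideal. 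With that repair, the proof is complete.
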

\begin{proof} (1) $\ra$ (2): Assume (1), and let $\mc P$ be a finite character defining family of height-one essential primes.  By Lemma~\ref{l:spectral}, $\mc P$ is in fact the set of maximal $t$-ideals of $R$.  (This also follows from \cite[Corollary 43.9]{g}). Hence $R$ has $t$-dimension one.  Also, $R$ is $t$-potent since $\mc P$ has finite character.

(2) $\ra$ (3): Assume (2), and let $\mc P$ be a defining family of essential primes.  For $P \in \mc P$, $PR_P$ is a $t$-prime in the valuation domain $R_P$, and it is well known (see, e.g., \cite[Lemma 3.17]{k}) that this implies that $P$ is a $t$-prime of $R$.  Then, since $R$ has finite $t$-character by Theorem~\ref{t:potent}, $\mc P$ also has finite character and is therefore the entire set of $t$-primes (Lemma~\ref{l:spectral}).  Therefore, $R_P$ is a valuation domain for each $t$-prime $P$, that is, $R$ is a P$v$MD.

(3) $\ra$ (4): Let $R$ be a $t$-potent P$v$MD of $t$-dimension one.  Then $R_P$ is a rank-one valuation domain for each $t$-prime $P$, and hence $R=\bigcap R_P$ is completely integrally closed.  Also, since $R$ is $t$-potent and $t$-locally $d$-super potent, $R$ is $t$-super potent by Theorem~\ref{t:spchar}.

(4) $\ra$ (5): This follows from Lemma~\ref{l:cic}.

(5) $\ra$ (1): Assume (5).  Then $R$ has finite $t$-character by Theorem~\ref{t:potent}, and $R_M$ is a valuation domain for each maximal $t$-ideal $M$ by Corollary~\ref{c:locval}.  Hence $R$ is a generalized Krull domain.
\end{proof}

One upshot of Theorem~\ref{t:genkrull2} is that a $t$-super potent domain of $t$-dimension one must be completely integrally closed.  Note that without the restriction on the $t$-dimension, a $t$-super potent domain need not even be integrally closed (Example~\ref{e:loc}).

We close with a brief discussion regarding the connection between P$v$MDs and $t$-super potent domains.  Observe that a $t$-potent P$v$MD is automatically $t$-super potent, but a P$v$MD need not be $t$-potent. (For example, a non-Dedekind almost Dedekind domain is a P$v$MD but is not $t$-potent (note that $d=t$ in this situation).)  In a P$v$MD, all nonzero finitely generated ideals are $t$-invertible, while in a $t$-super potent domain one has $t$-invertibility only ``above'' $t$-super rigid ideals.  Since, as is well known, if $I$ is a $t$-invertible ideal in a domain $R$, then both $I$ and $I^{-1}$ have finite type, a natural question arises: if $R$ is both $t$-super potent and $v$-coherent, must $R$ be a P$v$MD?  Even if we add the condition that $R$ be integrally closed, the answer is ``no,'' as is shown by the ring $R:=\mathbb Z + x\mathbb Q(z)[[x]]$ of Example~\ref{e:loc} (with $z$ an indeterminate):  $R$ is $t$-super potent by Theorem~\ref{t:sppullback}, is $v$-coherent by \cite[Theorem 3.5]{gh}, is integrally closed by standard pullback results, but is not a P$v$MD \cite[Theorem 4.1]{fg}.

We thank the referee for numerous suggestions that have greatly improved this paper.

\end{document}